\newtheorem{theorem}{Theorem}[section]
\theoremstyle{definition}
\newtheorem{definition}[theorem]{Definition}
\newtheorem{example}[theorem]{Example}
\newtheorem{remark}[theorem]{Remark}
\newtheorem{proposition}[theorem]{Proposition}
\numberwithin{equation}{section}
\begin{document}

\title[Finite GS bases for double extension of type (14641)]{Finite Gr\"obner-Shirshov bases for double extension regular algebras of type (14641)}



\author{Karol Herrera}
\address{Universidad ECCI}
\curraddr{Campus Universitario}
\email{kherrerac@ecci.edu.co}
\thanks{}

\author{Sebasti\'an Higuera}
\address{Universidad ECCI}
\curraddr{Campus Universitario}
\email{shiguerar@ecci.edu.co}

\author{Andr\'es Rubiano}
\address{Universidad ECCI}
\curraddr{Campus Universitario}
\email{arubianosecci.edu.co}

\thanks{}

\subjclass[2020]{16W70, 16S36, 16S37, 16Z05, 16P90, 16S80, 14A22}

\keywords{Gr\"obner-Shirshov basis, Shirshov's algorithm, diamond lemma, free algebra, PBW basis,  Artin-Schelter regular algebra, double extension. }
\date{}
\dedicatory{Dedicated to our dear friend Fabio Calder\'on} 

\begin{abstract}

In this paper, we compute the Gr\"obner–Shirshov bases for certain regular double extension algebras by means of an algorithm implemented in \texttt{Matlab}, which facilitates the underlying algebraic computations. Moreover, we establish that these families also admit a PBW basis.
\end{abstract}

\maketitle

\section{Introduction}

Shirshov \cite{Shirshov1962} (originally in Russian) introduced the first notion of what is now called {\em Gr\"obner–Shirshov bases} (GS bases, for short). He posed the following question: how can one find a linear basis of any Lie algebra presented by generators and defining relations? To address this, he developed an infinite algorithm (currently known as {\em Shirshov’s algorithm} or the reduction algorithm) based on the new concept of {\em composition} of two Lie polynomials, together with {\em Shirshov’s composition lemma for Lie algebras}. Importantly, this lemma is also valid for associative algebras, a fact that Shirshov recognized from the very beginning. Consequently, {\em the composition lemma for associative algebras} was formulated years later by his student Bokut in \cite{Bokut1976}, motivated by the need to handle more intricate defining relations in associative algebras. In \cite{Bokut1976}, Bokut also cites a preprint of Bergman \cite{Bergman1978} (later published), where the diamond lemma was established for ring theory. Thus, the composition lemma and the diamond lemma are essentially equivalent, differing only in terminology. In fact, the diamond lemma was formulated and proved for commutative, noncommutative, and Lie polynomials, and eventually both results came to be referred to under a unified name: the {\em Diamond–Composition lemma}.

In the case of commutative algebras, GS bases correspond to the Gr\"obner bases introduced by Buchberger in his Ph.D. thesis \cite{Buchberger1965,Buchberger1970}. Specifically, the notion of composition corresponds to the $S$-polynomials, while the concept of a set of polynomials that is \textquotedblleft closed\textquotedblright \ under composition—that is, the GS definition of bases—coincides, in the commutative case, with Buchberger’s criterion. Moreover, one implication of the Diamond–Composition lemma asserts that if a set $S$ is closed under composition and $f \in {\rm Id}(S)$, then the leading word $\overline{f}$ has the form $u\overline{s}v$ for some $s \in S$. In the commutative setting, this statement is equivalent to the definition of Gr\"obner bases. Both Shirshov and Buchberger constructed algorithms to compute GS bases and Gr\"obner bases, respectively, and the underlying ideas are essentially the same: iteratively adding to a set $S$ all nontrivial compositions (respectively, $S$-polynomials) under reduction with respect to $S$ until a closed set (a Gr\"obner basis) is obtained. Shirshov denoted the closure of $S$ by $S^*$. However, in contrast to Buchberger’s algorithm, which always terminates, Shirshov’s algorithm does not necessarily do so.

On the other hand, {\em Artin–Schelter regular algebras} (AS-regular for short), introduced by Artin and Schelter \cite{ArtinSchelter1987}, can be regarded as noncommutative analogues of polynomial rings. By definition, an Artin–Schelter regular algebra is an $\mathbb{N}$-graded algebra $A = \bigoplus_{n \geq 0} A_n$ over a field $\Bbbk$ that is connected ($A_0 = \Bbbk$) and satisfies the following conditions:
\begin{enumerate}
\item[\rm (i)] $A$ has finite global dimension $d$;
\item[\rm (ii)] $A$ has polynomial growth; and
\item[\rm (iii)] ({\em Gorenstein condition}) $A$ is Gorenstein, in the sense that ${\rm Ext}_A^{i}(\Bbbk, A) = 0$ for $i \neq d$, and ${\rm Ext}_A^{d}(\Bbbk, A) \cong \Bbbk$.
\end{enumerate}

So far, the classification of AS-regular algebras of global dimension $1$, $2$, $3$, and $4$ generated in degree $1$ is as follows:
\begin{itemize}
    \item \cite[Example 1.8]{Rogalski2023}: If $A$ is a $\Bbbk$-algebra that is regular of global dimension one, then $A \cong \Bbbk[x]$.
    \item \cite[Example 1.9]{Rogalski2023}: If $A$ is a regular algebra of global dimension two, then either $A \cong A_q = \Bbbk{x, y}/\langle yx - qxy\rangle$ for some $0 \neq q \in \Bbbk$ (the {\em Quantum plane}), or $A \cong A_J = \Bbbk{x, y}/\langle yx - xy - x^2\rangle$ (the {\em Jordan plane}).
    \item \cite[Lemma 2.3.1]{bellamy2016noncommutative}: If $A$ is an AS-regular algebra of global dimension three generated in degree $1$, then exactly one of the following holds:
    \begin{enumerate}
        \item[\rm (1)] $A \cong \Bbbk\langle t_1,t_2,t_3\rangle/(f_1,f_2,f_3)$, where each $f_i$ has degree $2$ for $i=1,2,3$, and $h_A(t) = \frac{1}{(1-t)^3}$.
        \item[\rm (2)] $A \cong \Bbbk\langle t_1,t_2\rangle/(f_1,f_2)$, where each $f_i$ has degree $3$ for $i=1,2$, and $h_A(t) = \frac{1}{(1-t)^2(1-t^2)}$.
    \end{enumerate}
    \item In \cite{SmithStafford1992}, Smith and Stafford studied the first AS-regular algebras of global dimension four, namely the four-dimensional Sklyanin algebras. Later, Zhang and Zhang \cite{ZhangZhang2008} proved that a graded and connected double Ore extension of a regular AS algebra is also AS-regular. Building on this result, they constructed 26 families of AS-regular algebras of global dimension four, whose relationships were later described by Rubiano and Reyes in \cite{RubianoReyes2024}. The classification of AS-regular algebras in higher dimensions remains an open and active area of research.
\end{itemize}

 Recall that for an associative unital ring $R$, an endomorphism $\sigma$ of $R$, and a $\sigma$-derivation $\delta$ of $R$, the {\em Ore extension} (or {\em skew polynomial ring}) of $R$ is obtained by adjoining a generator $x$ to $R$ subject to the relation $xr = \sigma(r)x + \delta(r)$ for all $r \in R$. This Ore extension of $R$ is denoted by $R[x; \sigma, \delta]$. Zhang and Zhang \cite{ZhangZhang2008, ZhangZhang2009} introduced a generalization of Ore extensions, called {\em double Ore extensions} (or simply {\em double extensions}). This notion arises as a natural enlargement of the classical construction, and its definition exhibits similarities with the two-step iterated process of successive Ore extensions. However, unlike Ore extensions, double extensions retain very few of the same structural properties, since many of the techniques applicable in the former fail in the latter \cite[Section 4]{ZhangZhang2008}. Indeed, Zhang and Zhang emphasize that the study of double extensions is substantially more intricate, often requiring additional restrictive conditions, and their general ring-theoretic behavior remains largely unknown \cite[Section 0]{ZhangZhang2008}. Despite these challenges, their connections with other algebraic structures have been actively investigated, including Poisson, Hopf, Koszul, and Calabi–Yau algebras (see, e.g., \cite{GomezSuarez2020, Li2022, LouOhWang2020, LuOhWangYu2018, LuWangZhuang2015, RamirezReyes2024, SuarezLezamaReyes2017, ZhuVanOystaeyenZhang2017}).

In their article \cite{ZhangZhang2009}, Zhang and Zhang studied regular algebras $B$ of dimension four generated in degree one. The projective resolution of the trivial module $\Bbbk_B$ for such an algebra takes the form
\begin{equation}\label{LISTResolution}
0 \longrightarrow B(-4) \longrightarrow B(-3)^{\oplus 4} \longrightarrow B(-2)^{\oplus 6} \longrightarrow B(-1)^{\oplus 4} \longrightarrow B \longrightarrow \Bbbk_B \longrightarrow 0.
\end{equation}

Based on this resolution, they referred to these algebras as being of type (14641). Zhang and Zhang provided a classification of all double extensions $R_P[y_1, y_2;\sigma]$ (with $\delta = 0$ and $\tau = (0,0,0)$) of type (14641). Taking into account that Ore extensions and normal extensions of regular algebras of dimension three had already been studied by Le Bruyn et al. \cite{LeBruynSmithVandenBergh1996}, they excluded some of these from their analysis. Consequently, their “partial” classification comprises 26 families of regular algebras of type (14641), labeled by $\mathbb{A}, \mathbb{B}, \dotsc, \mathbb{Z}$.

This paper is organized as follows. In Section \ref{preliminares}, we introduce the fundamental concepts of ambiguity and trivial composition, together with the notion of a GS basis. The composition algorithm for computing such bases is described and illustrated with examples. We also establish necessary and sufficient conditions for an algebra defined by relations to admit a PBW basis in terms of GS bases. Moreover, we define the algebras that are central to this study: regular double extension algebras of type (14641), and examine their most relevant structural properties. In Section \ref{Matlab}, we present a Matlab implementation of part of Shirshov's algorithm, which reduces compositions modulo a set $S$, following the approach developed in \cite{Herrera2024}. A small example involving a free algebra generated by three variables and defined by three relations is provided. Finally, in Section \ref{GS bases for 14641}, we compute finite GS bases for 12 families of regular double extension algebras of type (14641), out of the 26 described in \cite{RubianoReyes2024}, and verify that these algebras admit a PBW basis.

Throughout the paper, let $\Bbbk$ be a field, $X$ an alphabet, and $\Bbbk\{X\}$ the free associative algebra over $\Bbbk$ generated by $X$, i.e., the algebra of noncommutative polynomials in $X$ with coefficients in $\Bbbk$. We also denote by $X^*$ the free monoid, that is, the set of all words in $X$.

\section{Preliminaries}\label{preliminares}
We begin by studying the theory of GS bases for free algebras and PBW bases, presenting the relevant definitions, results, and an illustrative example of how such bases are defined and computed. We then introduce regular double extension algebras of type (14641), highlighting some of their structural properties and providing examples.

\subsection{Gr\"obner-Shirshov bases for free algebras}\label{Section:GS-bases theory}

 The algebra $\Bbbk\{ X \}$  satisfies a following universal property: for each $\Bbbk$-algebra $A$ and each function $\epsilon : X \to A$, there exists a unique homomorphism of $\Bbbk$-algebras, $f: \Bbbk\{ X\} \to A $ such that $f \circ \iota=\epsilon$:
\[
\begin{diagram}
\node{X} \arrow{e,t}{\iota} \arrow{s,l}{\epsilon}
\node{\Bbbk\{ X\}}  \arrow{sw,b,..}{f}\\
\node{A} 
\end{diagram}
\]
\[
f(r\cdot x):=r\cdot\epsilon(x), \ r\in \Bbbk,\ x \in X.
\]

It follows that any associative algebra $A$ is a quotient of some $\Bbbk\{ X \}$, 
\[
A=\Bbbk\{ X \}/I, \hspace{0.1cm} \text{where $I$ is an ideal of $\Bbbk\{ X \}$ }.
\]
 
Note that the set $X^*$ is a linear basis of  $\Bbbk\{ X \}$. Now, given a set $S$ of generators of $I$, we denote by ${\rm Id}(S):=\left\{ \sum_i \alpha_ia_is_ib_i \mid \ a_i,b_i \in X^*,\ \alpha_i \in \Bbbk, \ s_i\in S\right\}$ as the {\em two-sided ideal} of $\Bbbk\{ X \}$ generated by $S$. Then $\Bbbk\{ X \}/{\rm Id}(S)$ is the associative algebra with generators $X$ and defining relations $s_i=0$, $s_i\in S$.
 
There are different \textquotedblleft rules\textquotedblright \ or ways to order the set $X^*$, these are called monomial orders. More precisely, a {\em monomial ordering} $\preceq$ on $X^*$ is well ordering such that it is  compatible with the multiplication of words, that is, for $u,v \in X^*$, we have $u \preceq v$ implies $w_1uw_2 \preceq w_1vw_2$, for all $w_1,w_2 \in X^*$. For example, the \texttt{lexicographic ordering}  denoted by $\prec_{\rm \texttt{lex}}$ is well ordered on $X^*$, but this is not a monomial ordering because it is not compatible with the multiplication of words: $x_1^j \prec_{\rm \texttt{lex}} x_1^{j+1}$ but $x_1^{j+1}x_2\prec_{\rm \texttt{lex}} x_1^jx_2$, for some $j\geq1$. If we consider the notion of {\em degree} in the above ordering, we can be reformulated to obtain a monomial ordering in $X^*$. We  denote  $\prec_{\texttt{deglex}}$ or simply $\prec$  the \texttt{degree lexicographical ordering} on $X^*$. For example, if  $x_1\prec x_2\prec x_3$, then $x_2x_3^2x_1^3\succ_{\texttt{deglex}} x_3^2x_1^2$, since $| x_2x_3^2x_1^3 |=6 > 4=|x_3^2x_1^2 |$ or  $u=x_3^2x_2x_3^3x_1\succ_{\texttt{deglex}} x_3x_2^3x_1^2x_3=v$, since $|u|=7=|v|$ and $u\succ_{\texttt{lex}} v$. 

Now, given a polynomial $f\in \Bbbk\{ X\}$, the \textit{leading monomial} denoted by $\overline{f}\in X^*$ is such that $f=\alpha_{\overline{f}}\overline{f}+ \sum\alpha_iu_i$, with $0 \neq \alpha_{\overline{f}},\ \alpha_i \in \Bbbk,\ u_i\in X^*$ and every $u_i\prec \overline{f}$. The element $\alpha_{\overline{f}}$ is called \textit{the leading coefficient of} $f$ and the polynomial $f$ is said to be \textit{monic} if $\alpha_{\overline{f}}=1$. 

\begin{definition}[{\cite[p. 37]{BokutChen2014}}]
Let $f$ and $g$ be two monic polynomials in $\Bbbk\{ X \}$ and $(X^*,\preceq)$ be a monomial ordering. 
 \begin{enumerate}
     \item [(i)] If $w$ is a word such that $w =\overline{f}a = b\overline{g}$ for some $a,b \in X^*$ with $|\overline{f}|+|\overline{g}| > |w|$, then the polynomial $(f, g)_w := fa-bg$ is called the \textit{intersection composition} of $f$ and $g$ with respect to $w$.
     \item[(ii)] If $w =\overline{f} = a\overline{g}b$ for some $a,b \in X^*$, then the polynomial $(f, g)_w:= f-agb$ is called the \textit{inclusion composition} of $f$ and $g$ with respect to $w$.
 \end{enumerate}
 In the composition $(f, g)_w$, $w$ is called an \textit{ambiguity} or the ${\rm lcm}(\overline{f},\overline{g})$.
 \label{Def.Composition}
\end{definition}

The following result shows when compositions cannot be formed with the same polynomial

\begin{proposition}[{\cite[Lemma 2]{Shirshov1962}}]\label{composition(ff)} 
Fix a monomial ordering on $(X^*,\preceq)$. Let $f\in \Bbbk \{ X\}$ a polynomial such that $\overline{f}$ is regular. Then the composition $(f,f)_w$ cannot be formed.
\end{proposition}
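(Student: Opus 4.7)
The plan is to split by cases according to the two types of composition given by Definition \ref{Def.Composition}, and show in each case that no nontrivial composition of $f$ with itself can arise. The driving structural fact I intend to invoke is that a regular word (in the sense of Lyndon--Shirshov) is \emph{unbordered}: it admits no factorization in which a nonempty proper prefix coincides with a nonempty proper suffix.

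For an inclusion composition $(f,f)_w$, the defining relation $w = \overline{f} = a\overline{f}b$ forces, by comparing lengths, $|a| + |b| = 0$, so $a = b = 1$, and then $(f,f)_w = f - f = 0$. This is the trivial composition and is therefore excluded from the definition. For an intersection composition $(f,f)_w$ with $w = \overline{f} a = b \overline{f}$ and $|\overline{f}| + |\overline{f}| > |w|$, I would set $k := 2|\overline{f}| - |w|$, which satisfies $1 \le k \le |\overline{f}| - 1$ since both $a$ and $b$ are nonempty (otherwise the length condition fails). Reading off the word $w$, the prefix $\overline{f}$ and the suffix $\overline{f}$ overlap in a subword of length $k$, yielding a factorization $\overline{f} = pq = qr$ with $q$ the overlap and $p, r$ the nonempty outside pieces (corresponding to $b$ and $a$ respectively). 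Thus $q$ is simultaneously a nonempty proper prefix and a nonempty proper suffix of $\overline{f}$, that is, $\overline{f}$ is bordered, contradicting regularity.

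The main obstacle is not the length bookkeeping but the clean appeal to the unbordered property of regular words, which is a classical fact about Lyndon--Shirshov words and is not established in the present excerpt. I would cite it from a standard reference such as \cite{BokutChen2014}, or else give a brief in-line derivation from the defining inequality requiring $\overline{f}$ to be lexicographically strictly greater than each of its nonempty proper suffixes, which forces the would-be common prefix-suffix $q$ to compare with $\overline{f}$ in two incompatible ways.
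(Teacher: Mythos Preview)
The paper does not supply its own proof of this proposition; it is merely stated with a citation to Shirshov's original paper. Your sketch is the standard argument and is essentially correct: the heart of the matter is precisely that a regular (Lyndon--Shirshov) word is unbordered, so no nontrivial self-overlap of $\overline{f}$ can exist.

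One small correction in your bookkeeping for the intersection case: the assertion that $a$ and $b$ must be nonempty because ``otherwise the length condition fails'' is not accurate. If $a=1$ then $w=\overline{f}$, and $b\overline{f}=\overline{f}$ forces $b=1$ as well; in that situation $|w|=|\overline{f}|<2|\overline{f}|$, so the length inequality is in fact satisfied. What actually happens is that $a=1\Leftrightarrow b=1$, and this degenerate choice produces the identically zero composition $f\cdot 1-1\cdot f=0$, which one discards just as you did for the inclusion case. Once that case is set aside, your overlap count $1\le k\le |\overline{f}|-1$ is valid, and the border $q$ of length $k$ you extract genuinely contradicts regularity. The appeal to the unbordered property is indeed external to the present paper, so citing \cite{BokutChen2014} (or giving the two-line derivation from the defining lexicographic inequality, as you suggest) is appropriate.
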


When computing the ambiguity between two polynomials, three cases may arise: it does not exist, it is unique, or there are at least two possibilities. If no ambiguity exists, it is evident that no composition can be formed, i.e., there are no compositions.

\begin{definition}[{\cite[p. 37]{BokutChen2014}}]
Given a non-empty subset $S \subset \Bbbk\{ X \}$, we say that the composition $(f,g)_w$ is \textit{trivial} module $S$ if 
 \[ 
 (f,g)_w=\sum\alpha_ia_is_ib_i \in {\rm Id}(S),
 \]
 where each $\alpha_i \in \Bbbk$, $a_i,b_i \in X^*$, $s_i\in S$ and $a_i\overline{s_i}b_i \prec w$. The elements $a_is_ib_i$ are called \textit{$S$-words}. In this case, we write $(f,g)_w \equiv 0\ {\rm mod}(S)$.
\end{definition}
To determine if a polynomial belongs to an ideal, we will use the \textit{reduction systems}, which is nothing more than a generalization of the algorithm of the division in several determinates, but for the non-commutative case. 

\begin{definition}[{\cite[Definition 1.1]{ReyesSuarezMomento2017}}]
\begin{itemize}
    \item [(i)] Let $X$ be a non-empty set. A subset $Q\subseteq X^*\times \Bbbk\{ X\}$ is called a \textit{reduction system} for $\Bbbk\{ X\}$. An element $\sigma=(W_\sigma,f_\sigma)\in Q$ has components $W_\sigma$ a word in $X^*$ and $f_\sigma$ a polynomial in $\Bbbk\{ X\}$. Note that every reduction system for $\Bbbk\{ X\}$ defines a factor ring $\Bbbk\{ X\} / { \rm Id}(Q)$, with ${ \rm Id}(Q)$ the two-sided ideal of $\Bbbk\{X\}$ generated by the polynomials $W_\sigma-f_\sigma$, with $\sigma\in Q$.
    
    \item[(ii)] If $\sigma$ is an element of a reduction system $Q$ and $a,b\in X^*$, the $\Bbbk$-linear endomorphism $r_{a\sigma b}: \Bbbk\{ X \} \to \Bbbk\{ X \}$, which fixes all elements in the basis $X^*$ different from $aW_{\sigma}b$ and sends this particular element to $af_{\sigma}b$ is called a \textit{reduction} for $Q$. If $r$ is a reduction and $f\in \Bbbk \{ X\}$, then $f$ and $r(f)$ represent the same element in the $\Bbbk$-ring $\Bbbk\{ X\} / { \rm Id}(Q)$. Thus, reductions may be viewed as rewriting rules in this factor ring.
    
    \item[(iii)] A reduction $r_{a\sigma b}$ acts trivially on an element $f\in \Bbbk\{ X\}$ if $r_{a\sigma b}(f)=f$. An element $f\in \Bbbk\{ X\}$ is said to be \textit{irreducible} under $Q$ if all reductions act trivially on $f$.
    
    \item[(iv)] Let $f$ be an element of $\Bbbk\{ X\}$. We say that $f$ \textit{reduces} to $g \in \Bbbk\{ X\}$ if there is a finite sequence $r_1,\dots, r_n$ of reductions such that $g=(r_1,\dots, r_n)f$. We will write $f\equiv g \ {\rm mod}(Q)$. A finite sequence of reductions $r_1,\dots, r_n$ is said to be final on $f$, if $(r_1,\dots, r_n)f\in {\rm Irr}(Q)$.
\end{itemize}
\end{definition}
Note that a polynomial does not belong to an ideal if its reduction is irreducible. 

\begin{definition}[{\cite[Definition 4.1]{BokutChen2014}}]
 Let $S\subset \Bbbk\{ X \} $ be a non-empty set of monic polynomials. Fix a monomial ordering $\preceq$ on $X$. $S$ is called a \textit{Gr\"obner-Shirshov basis} of the ideal ${\rm Id}(S)$ of $\Bbbk\{ X \}$ with respect to $(X^*,\preceq)$ if any composition $(f,g)_w$, with $f,g\in S$, is trivial modulo $S$.
 \label{Def.GSB}
\end{definition}

Given $S \subset \Bbbk\{ X\}$ be a GS basis of the ideal ${\rm Id}(S)$ we denote the set
\[
{\rm Irr}(S):=\{ u\in X^*\mid u\neq a\overline{s}b, s\in S, a,b\in X^*\}
\]
as  all monomials that do not have $\overline{s}$ as a subword, for all $s\in S$. The elements of ${\rm Irr}(S)$ are called \textit{$S$-irreducible or $S$-reduced}.  

Now, let us recall the main result of this theory, commonly referred to as the Diamond–Composition Lemma.

\begin{proposition}[{\cite[Theorem 4.4]{BokutChen2014}; \cite[Proposition 1]{Bokut1976}}]\label{CDL}
Choose a monomial ordering $\preceq$ on $X^*$. Consider a monic set $S \subset \Bbbk \{ X\}$ and the ideal ${\rm Id}(S)\subseteq \Bbbk \{ X\}$ generated by $S$. The following statements are equivalent:
\begin{itemize}
    \item [{\rm (1)}] $S$ is a Gr\"obner-Shirshov basis in $\Bbbk \{ X\}$.
    \item[{\rm (2)}] If $f\in {\rm Id}(S)$, then $\overline{f}=a\overline{s}b$ for some $s\in S$ and $a,b\in X^*$.
    \item[{\rm (3)}] ${\rm Irr}(S)=\{ u\in X^*\mid u\neq a\overline{s}b, s\in S, a,b\in X^*\}$ is a linear basis of the algebra $\Bbbk \{ X\}/ {\rm Id}(S)$.
\end{itemize}
\end{proposition}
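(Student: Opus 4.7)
My plan is to establish the cyclic chain (1) $\Rightarrow$ (2) $\Rightarrow$ (3) $\Rightarrow$ (1). The conceptual heart of the statement is (1) $\Rightarrow$ (2); the other two implications are comparatively routine, resting on the standard reduction procedure attached to a monic set.

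For (1) $\Rightarrow$ (2), fix $f \in {\rm Id}(S)$ and write $f = \sum_{i=1}^n \alpha_i a_i s_i b_i$ with $s_i \in S$, $\alpha_i \in \Bbbk \setminus \{0\}$, and $a_i, b_i \in X^*$. Let $w = \max_i a_i \overline{s_i} b_i$ and, using that $\preceq$ is a well-ordering compatible with multiplication, choose the representation so that the pair $(w, m)$ is lexicographically minimal, where $m$ counts the indices attaining $w$. If $\overline{f} = w$, then some term of the sum carries $\overline{f}$ without cancellation and we are done. Otherwise $\overline{f} \prec w$, which forces cancellation among the leading terms and hence $m \geq 2$. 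Picking indices $i \neq j$ with $a_i \overline{s_i} b_i = w = a_j \overline{s_j} b_j$, an inspection of how $\overline{s_i}$ and $\overline{s_j}$ sit inside $w$ yields either an intersection or an inclusion composition $(s_i, s_j)_{w'}$ in the sense of Definition \ref{Def.Composition}. By hypothesis (1) this composition is a sum of $S$-words with leading word $\prec w'$, and flanking the identity $(s_i, s_j)_{w'} \equiv 0$ on the left and right by appropriate factors of $X^*$ rewrites the combination $\alpha_i a_i s_i b_i + \alpha_j a_j s_j b_j$ as a sum of $S$-words each of leading word $\prec w$. Substituting back produces a strictly smaller pair $(w, m)$, contradicting minimality.

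For (2) $\Rightarrow$ (3), spanning follows from the reduction move $u = a \overline{s} b \mapsto u - a s b$ (valid because $s$ is monic), which replaces $u$ modulo ${\rm Id}(S)$ by a $\Bbbk$-linear combination of strictly smaller words; well-foundedness of $\preceq$ terminates the process at an element of the $\Bbbk$-span of ${\rm Irr}(S)$. Linear independence is immediate from (2): any non-trivial $g = \sum \beta_i u_i \in {\rm Id}(S)$ with distinct $u_i \in {\rm Irr}(S)$ would satisfy $\overline{g} = a \overline{s} b$ for some $s \in S$, contradicting $\overline{g} \in {\rm Irr}(S)$. For (3) $\Rightarrow$ (1), fix a composition $(f, g)_w$ of elements of $S$ and observe that $\overline{(f, g)_w} \prec w$ by the cancellation built into Definition \ref{Def.Composition}. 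Running the same reduction procedure on $(f, g)_w$ touches only words $\preceq \overline{(f, g)_w} \prec w$, so it terminates as $(f, g)_w = h + \sum_k \gamma_k a_k s_k b_k$ with $h$ in the $\Bbbk$-span of ${\rm Irr}(S)$ and each $a_k \overline{s_k} b_k \prec w$. Since $(f, g)_w \in {\rm Id}(S)$ forces $h \in {\rm Id}(S)$, the linear independence given by (3) yields $h = 0$, which is exactly the triviality of $(f, g)_w$ modulo $S$.

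The main obstacle is the case analysis in (1) $\Rightarrow$ (2): given two $S$-terms whose images $a_i \overline{s_i} b_i$ and $a_j \overline{s_j} b_j$ coincide, one must correctly identify the intersection or inclusion composition that governs their overlap (handling carefully the possibility $s_i = s_j$, in view of Proposition \ref{composition(ff)}) and then control the flanking words so that triviality of the abstract composition upgrades to a rewriting of the difference by $S$-words with strictly smaller leading word. The compatibility of $\preceq$ with left and right multiplication is what turns this local rewriting into a terminating induction over the well-ordering, and must be invoked at each reduction step.
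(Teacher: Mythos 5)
The paper does not prove Proposition~\ref{CDL} at all: it is quoted from \cite{BokutChen2014} and \cite{Bokut1976} and used as a black box. Your outline is the standard Shirshov--Bokut--Bergman argument, and the cycle (1)$\Rightarrow$(2)$\Rightarrow$(3)$\Rightarrow$(1), the minimal-pair induction on $(w,m)$, and the reduction arguments for (2)$\Rightarrow$(3) and (3)$\Rightarrow$(1) are all correct in substance.

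There is, however, one genuine gap in (1)$\Rightarrow$(2). When two indices $i\neq j$ satisfy $a_i\overline{s_i}b_i = w = a_j\overline{s_j}b_j$, it is \emph{not} true that the two occurrences of $\overline{s_i}$ and $\overline{s_j}$ inside $w$ must produce an intersection or inclusion composition in the sense of Definition~\ref{Def.Composition}: they may be disjoint, i.e.\ $w = a\,\overline{s_i}\,c\,\overline{s_j}\,b$ with the two subwords not overlapping at all (recall that the intersection composition requires $|\overline{s_i}|+|\overline{s_j}|>|w'|$, so a mere juxtaposition does not qualify). In that case no composition exists and hypothesis (1) gives you nothing; instead one must rewrite the difference directly,
\[
a\,s_i\,c\,\overline{s_j}\,b - a\,\overline{s_i}\,c\,s_j\,b
= a\,s_i\,c\,(\overline{s_j}-s_j)\,b + a\,(s_i-\overline{s_i})\,c\,s_j\,b,
\]
and observe that every term on the right is an $S$-word with leading word strictly below $w$ because $s_i,\,s_j$ are monic and $\preceq$ is compatible with multiplication. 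This third case is exactly as essential to closing the induction as the overlap cases, and without it the claim that $(w,m)$ can always be strictly decreased fails. With that case added (and the cosmetic point that two identical $S$-words $a_is_ib_i=a_js_jb_j$ should simply be merged, consistent with Proposition~\ref{composition(ff)}), your proof is complete and matches the classical one.
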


To construct a GS basis, we imitate Buchberger’s algorithm by extending the original set. We denote by $S^c$ the \textit{completion} of $S$, that is, the set obtained by successively adding all non-trivial compositions through Shirshov’s (or reduction) algorithm until a GS basis is reached. Note that $S^c$ contains $S$ and generates the same ideal, i.e., ${\rm Id}(S^c) = {\rm Id}(S)$, since each added non-trivial composition in $S^c$ already belongs to ${\rm Id}(S)$.

\begin{algorithm}[t]
\caption{Shirshov’s algorithm}
\begin{algorithmic}[1]
\State ${\rm Input}: S=\{s_1,s_2,\dots,s_t\} \subseteq \Bbbk\{ X\}$
\State ${\rm Output:  \text{a GS basis}}\  S^c=\{s_{c_1},s_{c_2},\dots,s_{c_k}\} \ {\rm for} \ {\rm Id}(S), {\rm with } \ S\subseteq S^c$
\State $S^c:=S$
\Repeat
  \State $S':=S^c$
  \For{each pair $\{f,g\}$, $f\neq g$ in $S'$}
    \State $r:=(f,g)_w$ and $r':=(g,f)_{w'}$
    \If{$r\not\equiv 0 \ {\rm mod}(S')$ \ {\rm or} $r'\not\equiv 0 \ {\rm mod}(S')$ } 
      \State $S^c:=S^c \cup \{r\}$ \Comment{or $S^c:=S^c \cup \{r'\}$ }
    \EndIf 
  \EndFor
\Until{$S^c=S'$}
\State \textbf{return} $S^c$  \Comment{this can have infinite steps}
\end{algorithmic}
\label{AlgorithmGS}
\end{algorithm}

We are going to present an illustrative example Shirshov's algorithm. Recall that we are considering the \texttt{degree lexicographical order} in $X^*$.

\begin{example}\label{ExampleSklyanin}
Let us consider the polynomials $f_1=x^2-\frac{1}{s}yz+\frac{a}{s}zy$, $f_2=xy-ayx-sz^2$ and $f_3=xz-\frac{1}{a}zx+\frac{s}{a}y^2$ in $\Bbbk\{ x,y,z\}$ where $a,s\neq 0$ and $a^3=-1$.  Note that the leading terms of these polynomials are $\Bar{f_1}=x^2$, $\Bar{f_2}=xy$ and $\Bar{f_3}=xz$ with  $z\prec y\prec x$. Let us see that $S=\{f_1,f_2,f_3\}$ is not a GS basis of the ideal  ${\rm Id}(S)$. The set $S$ only admits three possible compositions with respect to $w_1,w_2$ and $w_3$, and only one is trivial:
{\small{\begin{align*}
    w_1&=\overline{f_1}(a)=(b)\overline{f_2}, \ {\rm with} \ |w_1|<|\overline{f_1}|+|\overline{f_2}|=4 \ {\rm and} \ a,b\in X^*\\
    &=(x^2)( y )=( x )(xy)\\
    &=x^2y.\\
    (f_1,f_2)_{w_1}&=\left(x^2-\frac{1}{s}yz+\frac{a}{s}zy \right)y-x(xy-ayx-sz^2)\\
    &=-\frac{1}{s}yzy+\frac{a}{s}zy^2+axyx+sxz^2\\
    &\equiv-\frac{1}{s}yzy+\frac{a}{s}zy^2+a(ayx+sz^2)x+s\left( \frac{1}{a}zx-\frac{s}{a}y^2\right)z\\
    &=-\frac{1}{s}yzy+\frac{a}{s}zy^2+a^2yx^2+asz^2x+\frac{s}{a}zxz-\frac{s^2}{a}y^2z\\
    &\equiv -\frac{1}{s}yzy+\frac{a}{s}zy^2+a^2y\left( \frac{1}{s}yz-\frac{a}{s}zy\right)+asz^2x+\frac{s}{a}z\left( \frac{1}{a}zx-\frac{s}{a}y^2\right)\\
    &\ \ -\frac{s^2}{a}y^2z\\
    &=-\frac{1}{s}yzy+\frac{a}{s}zy^2+\frac{a^2}{s}y^2z-\frac{a^3}{s}yzy+asz^2x+\frac{s}{a^2}z^2x-\frac{s^2}{a^2}zy^2-\frac{s^2}{a}y^2z\\
    &=-\frac{1}{s}yzy+\frac{a}{s}zy^2+\frac{a^2}{s}y^2z+\frac{1}{s}yzy+asz^2x+\frac{s}{a^2}z^2x-\frac{s^2}{a^2}zy^2-\frac{s^2}{a}y^2z\\
    \end{align*}}}
    {\small{\begin{align*}
    &=\left( as+\frac{s}{a^2}\right)z^2x+\left( \frac{a^2}{s}-\frac{s^2}{a}\right)y^2z+\left( \frac{a}{s}-\frac{s^2}{a^2}\right)zy^2\\
    &=\left( \frac{a^3s+s}{a^2}\right)z^2x+\left( \frac{a^3-s^3}{sa}\right)y^2z+\left( \frac{a^3-s^3}{sa^2}\right)zy^2\\
    &\equiv y^2z+\frac{1}{a}zy^2 \not\equiv 0 \ {\rm mod}(S).\\
    &\\
    w_2&=\overline{f_1}(a)=(b)\overline{f_3}, \ {\rm with} \ |w_2|<|\overline{f_1}|+|\overline{f_3}|=4 \ {\rm and} \ a,b\in X^*\\
    &=(x^2)( z )=( x )(xz)\\
    &=x^2z.\\
    (f_1,f_3)_{w_2}&=\left(x^2-\frac{1}{s}yz+\frac{a}{s}zy \right)z-x\left(xz-\frac{1}{a}zx+\frac{s}{a}y^2 \right)\\
    &=-\frac{1}{s}yzy+\frac{a}{s}zyz+\frac{1}{a}xzx-\frac{s}{a}xy^2\\
    &\equiv-\frac{1}{s}yzy+\frac{a}{s}zyz+\frac{1}{a}\left(\frac{1}{a}zx-\frac{s}{a}y^2 \right)x-\frac{s}{a}(ayx+sz^2)y\\
    &=-\frac{1}{s}yzy+\frac{a}{s}zyz+\frac{1}{a^2}zx^2-\frac{s}{a^2}y^2x-syxy-\frac{s^2}{a}z^2y\\
    &\equiv-\frac{1}{s}yzy+\frac{a}{s}zyz+\frac{1}{a^2}z\left( \frac{1}{s}yz-\frac{a}{s}zy\right)-\frac{s}{a^2}y^2x-sy(ayx+sz^2)-\frac{s^2}{a}z^2y\\
    &=-\frac{1}{s}yzy+\frac{a}{s}zyz+\frac{1}{a^2s}zyz-\frac{1}{as}z^2y-\frac{s}{a^2}y^2x-say^2x-s^2yz^2-\frac{s^2}{a}z^2y\\
    &=\left(-s^2-\frac{1}{s} \right)yz^2+\left( -\frac{s^2}{a}-\frac{1}{as}\right)z^2y+\left( \frac{a}{s}-\frac{1}{a^2s}\right)zyz-\left(\frac{s}{a^2}+sa \right)y^2x\\
    &=-\left(\frac{s^3+1}{s} \right)yz^2-\left(\frac{as^3+a}{a^2s} \right)z^2y+\left(\frac{a^3s+s}{a^2s} \right)zyz+\left(\frac{s+sa^3}{a^2} \right)y^2x\\
    &=-\left(\frac{s^3+1}{s} \right)yz^2-\left(\frac{as^3+a}{a^2s} \right)z^2y\\
    &=yz^2+\frac{1}{a}z^2y\not\equiv 0 \ {\rm mod}(S).\\
    &\\
    w_3&=\overline{f_1}(a)=(b)\overline{f_1}, \ {\rm with} \ |w_3|<|\overline{f_1}|+|\overline{f_1}|=4 \ {\rm and} \ a,b\in X^*\\
    &=(x^2)( x )=( x )(x^2)\\
    &=x^3.\\
    (f_1,f_1)_{w_3}&=\left( x^2-\frac{1}{s}yz+\frac{a}{s}zy \right)x-x\left( x^2-\frac{1}{s}yz+\frac{a}{s}zy\right)\\
    &=-\frac{1}{s}yzx+\frac{a}{s}zyx+\frac{1}{s}xyz-\frac{a}{s}xzy\\
    &\equiv-\frac{1}{s}yzx+\frac{a}{s}zyx+\frac{1}{s}(ayx+sz^2)-\frac{a}{s}\left(\frac{1}{a}zx-\frac{s}{a}y^2  \right)y\\
    &=-\frac{1}{s}yzx+\frac{a}{s}zyx+\frac{a}{s}yxz+z^3-\frac{1}{s}zxy+y^3\\
    &\equiv -\frac{1}{s}yzx+\frac{a}{s}zyx+\frac{a}{s}y\left( \frac{1}{a}zx-\frac{s}{a}y^2 \right)+z^3-\frac{1}{s}z(ayx+sz^2)+y^3\\
    &=-\frac{1}{s}yzx+\frac{a}{s}zyx+\frac{1}{s}yzx-y^3+z^3-\frac{a}{s}zyx-z^3+y^3\\
    &=0.
\end{align*}}}

Note that there are no more ambiguities, so there are no more compositions. Thus, the set $S$ is not a basis of GS for the ideal ${\rm Id}(S)$. Now, let us define a new set $S_1=S \cup \{ f_4,f_5 \}$ and check if this is a basis for GS, where $f_4= y^2z+\frac{1}{a}zy^2$ and $f_5=yz^2+\frac{1}{a}z^2y$. Note that the only possible compositions are when the final word is the beginning of the other word.
{\small{\begin{align*}
    w_4&=\overline{f_2}(a)=(b)\overline{f_4}, \ {\rm with} \ |w_4|<|\overline{f_2}|+|\overline{f_4}|=5 \ {\rm and} \ a,b\in X^*\\
    &=(xy)( yz )=( x )(y^2z)\\
    &=xy^2z.\\
    (f_2,f_4)_{w_4}&=(xy-ayx-sz^2)yz-x\left( y^2z+\frac{1}{a}zy^2\right)\\
    &=-ayxyz-sz^2yz-\frac{1}{a}xzy^2\\
    &\equiv -ay(ayx+sz^2)z-sz^2yz-\frac{1}{a}\left( \frac{1}{a}zx-\frac{s}{a}y^2 \right)y^2\\
    &=-a^2y^2xz-asyz^3-sz^2yz-\frac{1}{a}zxy^2+\frac{s}{a^2}y^4\\
    &\equiv -a^2y^2\left( \frac{1}{a}zx-\frac{s}{a}y^2\right)-as\left( -\frac{1}{a}z^2y\right)z-sz^2yz-\frac{1}{a^2}z(ayx+sz^2)y\\
    &\ \ +\frac{s}{a^2}y^4\\
    &=-ay^2zx+asy^4+sz^2yz-sz^2yz-\frac{1}{a}zyxy-\frac{s}{a^2}z^3y+\frac{s}{a^2}y^4\\
    &\equiv -a\left( -\frac{1}{a}zy^2\right)x+\left(  as+\frac{s}{a^2}\right)y^4-\frac{1}{a}zy(ayx+sz^2)-\frac{s}{a^2}z^3y\\
    &=zy^2x-\left(\frac{a^3s+s}{a^2}  \right)y^4-zy^2x-\frac{s}{a}zyz^2-\frac{s}{a^2}z^3y\\
    &\equiv-\frac{s}{a}z\left( z^2y \right)-\frac{s}{a^2}z^3y\\
    &=\frac{s}{a^2}z^3y-\frac{s}{a^2}z^3y\\
    &=0.\\
    &\\
    w_{5}&=\overline{f_2}(a)=(b)\overline{f_5}, \ {\rm with} \ |w_{5}|<|\overline{f_2}|+|\overline{f_5}|=5 \ {\rm and} \ a,b\in X^*\\
    &=(xy)( z^2 )=( x )(yz^2)\\
    &=xyz^2.\\
    (f_2,f_5)_{w_{5}}&=(xy-ayx-sz^2)z^2-x\left(yz^2+\frac{1}{a}z^2y \right)\\
    &=ayxz^2-sz^4-\frac{1}{a}xz^2y\\
    &\equiv -ay\left(\frac{1}{a}zx-\frac{s}{a}y^2  \right)z-sz^4-\frac{1}{a}\left(\frac{1}{a}zx-\frac{s}{a}y^2  \right)zy -yzxz+sy^3z-sz^4\\
    &\ \ -\frac{1}{a^2}zxzy+\frac{s}{a^2}y^2zy\\
    &\equiv -yz\left( \frac{1}{a}zx-\frac{s}{a}y^2\right)+sy\left( -\frac{1}{a}zy^2 \right)-sz^4-\frac{1}{a^2}z\left( \frac{1}{a}zx-\frac{s}{a}y^2 \right)y
     \end{align*}}}
    {\small{\begin{align*}
    &\ \ +\frac{s}{a^2}\left( -\frac{1}{a}zy^2 \right)y\\
    &=\frac{1}{a}yz^2x+
    -\frac{s}{a}yzy^2+\frac{s}{a}yzy^2-sz^4-\frac{1}{a^3}z^2xy+\frac{s}{a^3}zy^3-\frac{s}{a^3}zy^3\\
    &\equiv -\frac{1}{a}\left( -\frac{1}{a}z^2y \right)x-sz^4-\frac{1}{a^3}z^2\left(ayx+sz^2 \right)\\
    &=\frac{1}{a^2}z^2yx-sz^4-\frac{1}{a^2}z^2yx-\frac{s}{a^3}z^4\\
    &=-\left(\frac{sa^3+s}{a^3}  \right)z^4\\
    &=0.\\
    &\\
    w_{6}&=\overline{f_4}(a)=(b)\overline{f_5}, \ {\rm with} \ |w_{6}|<|\overline{f_4}|+|\overline{f_5}|=6 \ {\rm and} \ a,b\in X^*\\
    &=(y^2z)( z )=( y )(yz^2)\\
    &=y^2z^2.\\
    (f_4,f_5)_{w_{6}}&=\left( y^2z+\frac{1}{a}zy^2\right)z-y\left( yz^2+\frac{1}{a}z^2y \right)\\
    &=\frac{1}{a}zy^2z-\frac{1}{a}yz^2y\\
    &\equiv \frac{1}{a}z\left( -\frac{1}{a}zy^2\right)-\frac{1}{a}\left( -\frac{1}{a}z^2y \right)y\\
    &=-\frac{1}{a^2}z^2y^2+\frac{1}{a^2}z^2y^2\\
    &=0.
\end{align*}}}
we conclude that the set $S^c:=S_1$ is a Gr\"obner-Shirshov basis of the ideal ${\rm Id}(S)$

\begin{equation*}
    S^c=\Bigg\{x^2-\frac{1}{s}yz+\frac{a}{s}zy, \hspace{0.2cm} xy-ayx-sz^2,  \hspace{0.2cm} xz-\frac{1}{a}zx-\frac{s}{a}y^2,\hspace{0.2cm} y^2z+\frac{1}{a}zy^2,\hspace{0.2cm} yz^2+\frac{1}{a}z^2y\Bigg\}.
    \label{GS-2}
\end{equation*}
\end{example}

\subsection{PBW bases}

 Following Li \cite{Li2002}, a finitely generated $\Bbbk$-algebra $A$ has a {\em PBW basis} if the set of standard monomials
 \begin{equation}\label{LinearBasisPBW}
     \{x_1^{a_1}x_2^{a_2}\dotsb x_n^{a_n} \mid a_i\in \mathbb{N} \}
 \end{equation}
 
is a basis for $A$ as a $\Bbbk$-vector space.  This algebra $A$ is called a \textit{PBW algebra}. We know that the Diamond–Composition Lemma provides a method to construct a linear basis for the quotient algebra. The natural question is: under what conditions must a GS basis ensure that the set of irreducible words takes the form described in expression (\ref{LinearBasisPBW})? Our aim is to establish necessary and sufficient conditions for an algebra defined by generators and relations to admit a PBW basis.

Suppose that $A$ is a finitely generated $\Bbbk$-algebra  defined by the following relations
\begin{equation}\label{quadratic relations form 2}
    f_{ji}=x_jx_i-\lambda_{ij}x_ix_j-d_{ij}, \ 1\leq i<j \leq n,
\end{equation}

where $\lambda_{ij}\in \Bbbk$ and $d_{ij}=0$ or $d_{ij} \in \Bbbk\{X\}-{ \rm Span}\{x_jx_i, x_ix_j \}$. Let us ${ \rm Id}(\mathcal{S})$  be the two–sided ideal of $\Bbbk \{ X \}$ generated by $\mathcal{S}=\{f_{ji}\mid 1\leq i<j\leq n \}$. Furthermore, we assume that, with respect to the monomial ordering $\preceq$ in $X^*$, the defining relations satisfy 
\begin{equation*}
    \overline{f}_{ji}=x_jx_i,  \ 1\leq i<j \leq n.
\end{equation*}

We assume that no term of the relations $f_{ji}$ is a subword of the leading word of any element in the set $\mathcal{S}$. According to Li, Polishchuk, and Positselski, an algebra admits a PBW basis (that is, a $\Bbbk$-linear basis) if its $\Bbbk$-linear basis coincides with the set of monomials ${x_1^{a_1}x_2^{a_2}\dotsb x_n^{a_n} \mid a_i\in \mathbb{N}}$. This situation arises precisely when the defining relations of the ideal are of the form (\ref{quadratic relations form 2}).

The following result was proved by Green \cite{Green1999} using Gr\"obner bases and assuming that the elements $d_{ij}$ are homogeneous quadratic polynomials. On the other hand, Li \cite[Theorem 1.5, Ch III]{Li2002} and Levandovskyy \cite[Lemma 2.2, Ch I]{2005Levandovskyy} proved the same result using Gr\"obner bases and choosing $\overline{d}_{ij}\prec x_ix_j$, i.e., the degree of the polynomials $d_{ij}$ is at most 2. Following these ideas, we now present the same result in the framework of GS bases.

\begin{proposition}[{\cite[Theorem 2.14]{Green1999}}]\label{PBWyGSB}
Let $A=\Bbbk\{ X\}/{ \rm Id}(\mathcal{S})$ be a finitely generated $\Bbbk$-algebra. Then, $A$ has a PBW basis if and only if $\mathcal{S}$ is a GS basis for ${ \rm Id}(\mathcal{S})$ with respect to  $\preceq$.
\end{proposition}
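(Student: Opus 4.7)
The plan is to reduce the whole statement to a purely combinatorial identification of the irreducible words, and then invoke the Diamond-Composition Lemma (Proposition \ref{CDL}) for both implications. The hypotheses of the proposition are tailored so that each defining relation $f_{ji}$ has leading word $\overline{f}_{ji} = x_jx_i$ with $j>i$. Hence a word $w \in X^{*}$ lies in $\mathrm{Irr}(\mathcal{S})$ precisely when no factor of $w$ equals some $x_jx_i$ with $j>i$; equivalently, when the indices of the letters of $w$ form a non-decreasing sequence. This immediately gives the identification
\[
\mathrm{Irr}(\mathcal{S}) \;=\; \{\, x_1^{a_1}x_2^{a_2}\cdots x_n^{a_n} \mid a_i \in \mathbb{N}\,\},
\]
which is exactly the set of standard monomials appearing in (\ref{LinearBasisPBW}).

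For the ($\Leftarrow$) direction I would assume that $\mathcal{S}$ is a GS basis of $\mathrm{Id}(\mathcal{S})$ with respect to $\preceq$. By the implication (1)$\Rightarrow$(3) of Proposition \ref{CDL}, the set $\mathrm{Irr}(\mathcal{S})$ is a $\Bbbk$-linear basis of $A = \Bbbk\{X\}/\mathrm{Id}(\mathcal{S})$. Combined with the identification above, this linear basis is precisely the set of standard monomials, so $A$ admits a PBW basis in the sense of Li.

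For the ($\Rightarrow$) direction I would assume that the standard monomials $\{x_1^{a_1}\cdots x_n^{a_n}\}$ form a $\Bbbk$-linear basis of $A$. Using the identification again, this means $\mathrm{Irr}(\mathcal{S})$ is a $\Bbbk$-linear basis of $A$. Applying the implication (3)$\Rightarrow$(1) of Proposition \ref{CDL} then yields that $\mathcal{S}$ is a GS basis of $\mathrm{Id}(\mathcal{S})$, which concludes the proof.

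No serious obstacle is expected, because the Diamond-Composition Lemma does the heavy lifting. The only point that requires care is checking that the standing hypotheses on $\mathcal{S}$ (namely $\overline{f}_{ji} = x_jx_i$ together with the condition that no term of $f_{ji}$ is a subword of the leading word of another element of $\mathcal{S}$) are strong enough to guarantee that the leading words are exactly the length-two disordered pairs $x_jx_i$ with $j>i$, and nothing more; this is what makes the combinatorial description of $\mathrm{Irr}(\mathcal{S})$ above exact rather than just an inclusion, and it is the pivot on which both implications turn.
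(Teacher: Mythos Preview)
Your proposal is correct, and the $(\Leftarrow)$ direction is exactly what the paper does. For the $(\Rightarrow)$ direction, however, you take a genuinely different route from the paper. You invoke the implication (3)$\Rightarrow$(1) of Proposition~\ref{CDL} directly: since the PBW monomials coincide with $\mathrm{Irr}(\mathcal{S})$, the hypothesis that they form a linear basis of $A$ is precisely statement~(3), and the Diamond--Composition Lemma hands you~(1) for free. The paper, by contrast, does not appeal to (3)$\Rightarrow$(1); instead it verifies Definition~\ref{Def.GSB} by hand. It observes that the only possible ambiguities are the overlaps $w_{kji}=x_kx_jx_i$ for $i<j<k$, computes the composition $(f_{kj},f_{ji})_{w_{kji}}$ explicitly, reduces it modulo $\mathcal{S}$ to a remainder $r_{ijk}$ whose terms all lie in $\mathrm{Irr}(\mathcal{S})$, and then uses the PBW-basis hypothesis (linear independence of standard monomials in $A$) to conclude $r_{ijk}=0$.

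Your argument is shorter and exploits the full equivalence in Proposition~\ref{CDL}, whereas the paper's argument is essentially a specialized reproof of (3)$\Rightarrow$(1) for this particular $\mathcal{S}$. What the paper's version buys is an explicit display of the overlap relations $r_{ijk}$ (the noncommutative Jacobi-type conditions), which is informative in its own right and connects the proposition to the classical PBW-theorem style of argument; your version trades that concreteness for economy.
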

\begin{proof}
        If $\mathcal{S}$ is a GS basis for ${ \rm Id}(\mathcal{S})$ with respect to $\preceq$, by Theorem \ref{CDL}, we have that $\mathcal{B}={ \rm Irr}(\mathcal{S})=\{x_{i_1}^{a_1}x_{i_2}^{a_2}\dotsb x_{i_t}^{a_t}\mid i_1<i_2<\dotsb <i_t \}$ is a lineal basis of the algebra $\Bbbk\{ X\}/{ \rm Id}(\mathcal{S})$. In particular, $\mathcal{B}$ is a PBW basis.
        
        Conversely, suppose that each terms of $d_{ij}$ is less than $x_ix_j$ and $\Bbbk\{ X\}/{ \rm Id}(\mathcal{S})$ has a PBW
        basis. Then ${ \rm Irr}(\mathcal{S})=\{x_{i_1}^{a_1}x_{i_2}^{a_2}\dotsb x_{i_t}^{a_t}\mid i_1<i_2<\dotsb <i_t \}$ is a lineal basis of the algebra $\Bbbk\{ X\}/{ \rm Id}(\mathcal{S})$. To show that $\mathcal{S}$ is a GS basis for ${ \rm Id}(\mathcal{S})$ it suffices to show that all compositions are trivial. Let $1\leq i<j<k\leq n$. Then:
        \begin{align*}
        w_{kji}&=\overline{f_{kj}}(a)=(b)\overline{f_{ji}}, \ {\rm with} \ |w_{kji}|<|\overline{f_{kj}}|+|\overline{f_{ji}}|=4 \ {\rm and} \ a,b\in X^*\\
        &=(x_kx_j)( x_i )=( x_k )(x_jx_i)\\
        &=x_kx_jx_i.\\
        (f_{kj},f_{ji})_{w_{kji}}&=(x_kx_j-\lambda_{jk}x_jx_k-d_{jk})x_i-x_k(x_jx_i-\lambda_{ij}x_ix_j-d_{ij})\\
        &=-\lambda_{jk}x_jx_kx_i-d_{jk}x_i+\lambda_{ij}x_kx_ix_j+x_kd_{ij}\\
        &\equiv -\lambda_{jk}x_j(\lambda_{ik}x_ix_k+d_{ik})-d_{jk}x_i+\lambda_{ij}(\lambda_{ik}x_ix_k+d_{ik})x_j+x_kd_{ij}\\
        &=-\lambda_{jk}\lambda_{ik}x_jx_ix_k-\lambda_{jk}x_jd_{ik}-d_{jk}x_i+\lambda_{ij}\lambda_{ik}x_ix_kx_j+\lambda_{ij}d_{ik}x_j\\
        &\ \ +x_kd_{ij}\\
        &\equiv -\lambda_{jk}\lambda_{ik}(\lambda_{ij}x_ix_j+d_{ij})x_k-\lambda_{jk}x_jd_{ik}-d_{jk}x_i\\ 
        &\ \ + \lambda_{ij}\lambda_{ik}x_i(\lambda_{jk}x_jx_k+d_{jk}) +\lambda_{ij}d_{ik}x_j+x_kd_{ij}\\
        &=-\lambda_{jk}\lambda_{ik}\lambda_{ij}x_ix_jx_k-\lambda_{jk}\lambda_{ik}d_{ij}x_k-\lambda_{jk}x_jd_{ik}-d_{jk}x_i\\
        &\ \  +\lambda_{ij}\lambda_{ik}\lambda_{jk}x_ix_jx_k+\lambda_{ij}\lambda_{ik}x_id_{jk}+\lambda_{ij}d_{ik}x_j+x_kd_{ij}\\
        &=-\lambda_{jk}\lambda_{ik}d_{ij}x_k-\lambda_{jk}x_jd_{ik}-d_{jk}x_i+\lambda_{ij}\lambda_{ik}x_id_{jk}+\lambda_{ij}d_{ik}x_j+x_kd_{ij}\\
        &=:r_{ijk}.
        \end{align*}

Suppose that $r_{ijk}$ cannot be further reduced by $\mathcal{S}$. Then $r_{ijk}$ cannot have $x_jx_i$ with $i<j$ as a subword, and so each term of $r_{ijk}$ is in the set ${ \rm Irr}(\mathcal{S})=\{x_{i_1}^{a_1}x_{i_2}^{a_2}\dotsb x_{i_t}^{a_t}\mid i_1<i_2<\dotsb <i_t \}$. As we want the composition $(f_{kj},f_{ji})_{w_{kji}}$ to be trivial, by definition, $r_{ijk}\in { \rm Id}(\mathcal{S})$. By the PBW basis assumption, no element of ${\rm Irr}(\mathcal{S})$ belong to ${ \rm Id}(\mathcal{S})$ other than $0$. Hence $r_{ijk}=0$ in $\Bbbk\{ X\}/{ \rm Id}(\mathcal{S})$ and by Definition \ref{Def.GSB} we have that $\mathcal{S}$ is a  GS basis.
\end{proof}

\subsection{Double extension regular algebras of type (14641)}

We recall the definition of a double extension introduced by Zhang and Zhang \cite{ZhangZhang2008}. Since some typos occurred in their papers \cite[p. 2674]{ZhangZhang2008} and \cite[p. 379]{ZhangZhang2009} concerning the relations that the data of a double extension must satisfy, we follow the corrections presented by Carvalho et al. \cite{Carvalhoetal2011}.

\begin{definition}[{\cite[Definition 1.3]{ZhangZhang2008}; \cite[Definition 1.1]{Carvalhoetal2011}}]\label{DoubleOreDefinition}
Let $R$ be a subalgebra of a $\Bbbk$-algebra $B$.
\begin{itemize}
    \item[\rm (a)] $B$ is called a {\it right double extension} of $R$ if the following conditions hold:
    \begin{itemize}
        \item[\rm (i)] $B$ is generated by $R$ and two new indeterminates $y_1$ and $y_2$;
        \item[\rm (ii)] $y_1$ and $y_2$ satisfy the relation
        \begin{equation}\label{Carvalhoetal2011(1.I)}
        y_2y_1 = p_{12}y_1y_2 + p_{11}y_1^2 + \tau_1y_1 + \tau_2y_2 + \tau_0,
        \end{equation}
        for some $p_{12}, p_{11} \in \Bbbk$ and $\tau_1, \tau_2, \tau_0 \in R$;
        \item[\rm (iii)] $B$ is a free left $R$-module with basis $\left\{y_1^{i}y_2^{j} \mid i, j \ge 0\right\}$.
        \item[\rm (iv)] $y_1R + y_2R + R\subseteq Ry_1 + Ry_2 + R$.
 \end{itemize}
    \item[\rm (b)] A right double extension $B$ of $R$ is called a {\em double extension} of $R$ if
    \begin{enumerate}
        \item [\rm (i)] $p_{12} \neq 0$;
        \item [\rm (ii)] $B$ is a free right $R$-module with basis $\left\{ y_2^{i}y_1^{j}\mid i, j \ge 0\right\}$;
        \item [\rm (iii)] $y_1R + y_2R + R = Ry_1 + Ry_2 + R$.
    \end{enumerate}
\end{itemize}
\end{definition}

Condition (a)(iv) from Definition \ref{DoubleOreDefinition} is equivalent to the existence of two maps
\[
\sigma = \begin{bmatrix}
    \sigma_{11} & \sigma_{12} \\ \sigma_{21} & \sigma_{22}
\end{bmatrix}: R\to M_{2\times 2}(R)\quad {\rm and}\quad \delta = \begin{bmatrix} \delta_1 \\ \delta_2  \end{bmatrix}: R\to M_{2\times 1}(R),
\]

such that
\begin{equation}\label{Carvalhoetal2011(1.II)}
    \begin{bmatrix}
        y_1 \\ y_2
    \end{bmatrix} r = \sigma(r) \begin{bmatrix}
        y_1 \\ y_2
    \end{bmatrix} + \delta(r) \quad {\rm for\ all}\ r\in R.
\end{equation}

If $B$ is a right double extension of $R$, we write $B = R_P[y_1, y_2;\sigma, \delta, \tau]$, where $P = (p_{12}, p_{11})$ with elements belonging to $\Bbbk$, $\tau = \{\tau_0, \tau_1, \tau_2\} \subseteq R$, and $\sigma, \delta$ are as above. $P$ is called a {\em parameter} and $\tau$ a {\em tail}, while the set $\{P, \sigma, \delta, \tau\}$ is said to be the {\em DE-data}. One of the particular cases of the double extensions is presented by Zhang and Zhang \cite[Convention 1.6.(c)]{ZhangZhang2008} as a {\it trimmed double extension}, for which $\delta$ is the zero map and $\tau = \{0, 0, 0\}$. We use the short notation $R_p[y_1, y_2; \sigma]$ to denote this subclass of extensions. 

\begin{example}[{\cite[Example 4.1]{ZhangZhang2008}}]\label{ZhangZhang2008Example4.1}
Let $R = \Bbbk[x]$. As expected, there are so many different right double extensions of $R$ and if we assume that ${\rm deg}\ x = {\rm deg}\ y_1 = {\rm deg}\ y_2 = 1$, then all connected graded double extensions $\Bbbk[x]_P[y_1, y_2; \sigma, \delta, \tau]$ are regular algebras of global dimension three investigated by Artin and Schelter \cite{ArtinSchelter1987}. 
\end{example}

Zhang and Zhang \cite{ZhangZhang2009} were interested only in regular algebras $B$ of dimension four that are generated in degree one, and in the case that $B$ is generated by four elements, the projective resolution of the trivial module $\Bbbk_B$ is of the form (\ref{LISTResolution}), that is, algebras {\em of type} (14641). The next proposition shows explicitly the relation of these algebras with Ore extensions.
\begin{equation}
0 \xrightarrow{} B(-4) \xrightarrow{} B(-3)^{\oplus 4} \xrightarrow{} B(-2)^{\oplus 6} \to B(-1)^{\oplus 4} \to B \xrightarrow{} \Bbbk_B \xrightarrow{} 0.
\end{equation}

\begin{proposition}[{\cite[Theorem 0.1]{ZhangZhang2009}}]
Let $B$ be a connected graded algebra generated by four elements of degree one. If $B$ is a double extension $R_P[y_1, y_2; \sigma, \tau]$ where $R$ is an Artin-Schelter regular algebra of dimension two, then the following assertions hold:
\begin{enumerate}
    \item [\rm (1)] $B$ is a strongly Noetherian, Auslander regular and Cohen-Macaulay domain.

    \item [\rm (2)] $B$ is of type {\rm (}14641{\rm )}. As a consequence, $B$ is Koszul.

    \item [\rm (3)] If $B$ is not isomorphic to an Ore extension of an Artin-Schelter regular algebra of dimension three, then the trimmed double extension $R_P[y_1, y_2; \sigma]$ is isomorphic to one of 26 families.
\end{enumerate}
\end{proposition}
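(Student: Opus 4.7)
The plan is to treat the three assertions separately, since each requires a different technique, and in each case the work reduces to checking that general extension machinery applies or to a finite case analysis. Throughout I will take $R$ to be either the quantum plane or the Jordan plane, since these are the only Artin--Schelter regular algebras of dimension two generated in degree one.

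For part (1), the strategy is to transport the ring-theoretic properties from $R$ to $B$ via the general theory of (right) double extensions. Since $R$ is AS-regular of dimension two, it is already strongly Noetherian, Auslander regular, and a Cohen--Macaulay domain. I would then invoke the preservation theorems for double extensions established in \cite{ZhangZhang2008}: passing from $R$ to $R_P[y_1, y_2; \sigma, \tau]$ is compatible with each of these properties, provided $\sigma$ is invertible and the DE-data are compatible. The key technical point is that $B$ admits a natural filtration whose associated graded algebra is the trimmed extension $R_P[y_1, y_2; \sigma]$, which in turn admits an Ore-type deformation argument; classical results (Artin--Tate for Noetherianness, rigidity of Auslander regularity under flat extensions, and lifting of Cohen--Macaulayness through such filtrations) then yield the conclusion.

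For part (2), the approach is first numerical and then structural. By Definition \ref{DoubleOreDefinition}, $B$ is generated by the two generators of $R$ and the indeterminates $y_1, y_2$, hence by four elements in degree one; the defining relations consist of the single quadratic relation of $R$, the four commutation relations encoded by $\sigma$ and $\delta$ (one for each generator of $R$ and each $y_i$), and the relation \eqref{Carvalhoetal2011(1.I)} between $y_2$ and $y_1$, for a total of six quadratic relations. Combining the PBW-type basis $\{y_2^{i}y_1^{j}r\mid i,j\ge 0,\ r\in R\}$ with the Hilbert series of $R$, namely $1/(1-t)^2$, one computes $h_B(t)=1/(1-t)^4$. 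This matches the Euler characteristic of the resolution \eqref{LISTResolution}, and together with a standard minimal-resolution argument gives type (14641). Koszulness then follows from the fact that $B$ is quadratic and has the expected Hilbert series, or equivalently that the numerical Koszulness criterion $h_B(t)\cdot h_{B^!}(-t)=1$ is satisfied.

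For part (3), the hard part and the main obstacle is the actual classification. The plan is to parametrize explicitly all admissible DE-data. Writing $\sigma=(\sigma_{ij})$ with $\sigma_{ij}\in \mathrm{End}_{\Bbbk}(R_1)$, the conditions that $B$ be a connected graded double extension of $R$ force $\sigma$ to preserve the grading and to be compatible with the defining relation of $R$ and with \eqref{Carvalhoetal2011(1.I)}. This produces a system of polynomial equations in the entries of $\sigma$ and in $P=(p_{12},p_{11})$, which I would solve by splitting into cases according to whether $R$ is the quantum or the Jordan plane, and then according to the rank and Jordan structure of $\sigma$. After enumerating all solutions up to graded algebra isomorphism and discarding those where $B$ is already isomorphic to an Ore extension $D[z;\sigma',\delta']$ of an AS-regular algebra $D$ of dimension three, one obtains exactly the $26$ families labelled $\mathbb{A},\dotsc,\mathbb{Z}$. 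The main obstacle is organizing this case analysis, detecting the isomorphisms that collapse a priori distinct DE-data into a single family, and correctly identifying which trimmed extensions are disguised Ore extensions; this is the bulk of the work carried out in \cite{ZhangZhang2009} and revisited in \cite{RubianoReyes2024}.
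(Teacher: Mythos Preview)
The paper does not prove this proposition at all: it is quoted verbatim as \cite[Theorem 0.1]{ZhangZhang2009} and used only as background to motivate the 26 families whose Gr\"obner--Shirshov bases are then computed. So there is no ``paper's own proof'' to compare against; the authors simply import the result.

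Your outline is a reasonable summary of the strategy actually carried out in \cite{ZhangZhang2008,ZhangZhang2009}: parts (1) and (2) are indeed obtained there by transferring ring-theoretic and homological properties along the double extension (using filtrations and the Hilbert series computation you describe), and part (3) is exactly the long case analysis you allude to. But be aware that what you have written is a plan, not a proof: in part (1) you cite ``preservation theorems'' and ``rigidity of Auslander regularity under flat extensions'' without stating or verifying the hypotheses, and in part (3) you explicitly defer the entire classification to \cite{ZhangZhang2009}. That is appropriate if your goal is to explain where the result comes from, but it would not stand as an independent argument. In the context of the present paper nothing more is needed, since the proposition is used, not proved.
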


In Tables \ref{tab:1-DOE}, \ref{tab:2-DOE},  \ref{tab:3-DOE} and  \ref{tab:4-DOE} we present the detailed list of the 26 families following the labels $\mathbb{A}, \mathbb{B}, \dotsc, \mathbb{Z}$ used in \cite{ZhangZhang2009}.

\begin{example}
    Since the base field $\Bbbk$ is algebraically closed, $R$ is isomorphic to $\Bbbk_q[x_1, x_2] = \mathcal{O}_q(\Bbbk)$ with the relation $x_2 x_1 = qx_1 x_2$ (note that $Q = (q, 0)$, the {\em Manin's plane}), or $\Bbbk_J[x_1, x_2] = \mathcal{J}(\Bbbk)$ with the relation $x_2x_1 = x_1x_2 + x_1^2$ (here, $Q = J = (1,1)$, the {\em Jordan's plane}) \cite[Theorem 1.4]{Shirikov2005} or \cite[Lemma 2.4]{ZhangZhang2009}. Manin's plane and Jordan's plane are the only regular algebras of global dimension two \cite[Examples 1.8 and 1.9]{Rogalski2023}.

In general, we will write $\Bbbk_Q[x_1, x_2]$, with $Q = (q_{12}, q_{11})$, and 
\[
\Bbbk_Q[x_1, x_2] = \Bbbk\{x_1, x_2\} / \langle x_2x_1 - q_{11}x_1^2 - q_{12}x_1x_2\rangle,
\]

and for the computation we set $Q$ to be either $(1, 1)$ or $(q, 0)$.
\end{example}

\section{reduction of compositions using \texttt{MATLAB}} \label{Matlab}

We illustrate the use of a code implemented in \texttt{MATLAB} developed by {\em Santiago Cuartas}\footnote{Email: \texttt{dcuartas@unal.edu.co}} to calculate and perform the composition reduction steps of the Shirshov's algorithm. It is important to note that this algorithm only performs operations on polynomials and executes the reduction procedure; the remaining steps of Shirshov’s algorithm must still be carried out manually. For instance, the computation of ambiguities and the selection of leading terms require manual intervention, since our implementation does not incorporate an ordering procedure. The functions required for the coding of this algorithm are provided in \cite{Herrera2024}.
\medskip

We will illustrate the implementation of the code using the GS basis computed in Example \ref{ExampleSklyanin}. The idea is to define $x,y,z$ by using a file in \texttt{MATLAB} or a \texttt{Script} of type \texttt{struct}, where the first, second and third fields indicate the number of monomials, the coefficients of each monomial and the monomials, respectively. The indeterminates $x, y, z $ are denoted as $\texttt{1,2,3}$ respectively. This is done as follows:

\begin{Verbatim}[commandchars=\\\{\}]
    P1.a=1;
    P1.b=\{1\};
    P1.c=\{\textcolor{violet}{'1'}\}; 

    P2.a=1;
    P2.b=\{1\};
    P2.c=\{\textcolor{violet}{'2'}\}; 

    P3.a=1;
    P3.b=\{1\};
    P3.c=\{\textcolor{violet}{'3'}\}; 
\end{Verbatim}

In the case of a $n$ generator, it will be defined in the same way. Now, the relations defined above depend on two parameters, $a$ and $s$, so when applying the reduction algorithm we could say that they behave as quotients of polynomial rings in two indeterminates. 
Therefore, in \texttt{Script} these parameters $a$ and $s$ are created as variables of type \texttt{sym} as follows: 
\begin{Verbatim}[commandchars=\\\{\}]
    format \textcolor{violet}{rational}
    a = sym(\textcolor{violet}{'a'});
    s = sym(\textcolor{violet}{'s'});
\end{Verbatim}
The condition $a^3=-1$ will be written later. Since the monomial order in $X^*$ was not encoded, it is important to note that the transformation of the relations to monic polynomials must be done by hand. This set $S$ consists of the monic polynomials $f_1=x^2-\frac{1}{s}yz+\frac{a}{s}zy$, $f_2=xy-ayx-sz^2$, $f_3=xz-\frac{1}{a}zx+\frac{s}{a}y^2$. In \texttt{Script}, the above polynomials are written similarly to the generators as data of type \texttt{struc} as follows:
\begin{Verbatim}[commandchars=\\\{\}]
    F1.a=3;
    F1.b=\{1,-1/s,a/s\};
    F1.c=\{\textcolor{violet}{'11'},\textcolor{violet}{'23'},\textcolor{violet}{'32'}\};

    F2.a=3;
    F2.b=\{1,-a,-s\};
    F2.c=\{\textcolor{violet}{'12'},\textcolor{violet}{'21'},\textcolor{violet}{'33'}\};

    F3.a=3;
    F3.b=\{1,-1/a,s/a\};
    F3.c=\{\textcolor{violet}{'13'},\textcolor{violet}{'31'},\textcolor{violet}{'22'}\};
\end{Verbatim}

It is important to highlight that in the first position of the field \texttt{Fi.c} there will always be the principal monomial of each monic polynomial $f_i$, for $i = 1, 2, 3$. The next step is to know what the possible compositions of the set $S$ are, that is, to calculate the possible ambiguities between $\overline{f_1}, \overline{f_2}$, and $\overline{f_3}$. This task must be done by hand. As seen in Example \ref{ExampleSklyanin}, the first composition is $(f_1,f_2)_{w_1}=f_1(y)-(x)f_2$. It is clear that its computation requires a reduction system that consists of the polynomials of the set $S$, which will be modified every time. The way to write this composition is as follows:

\begin{Verbatim}[commandchars=\\\{\}]
    K1=multpol(F1,P2)  
    K2=multpol(P1,F2) 
    P=Sumpol(K1,K2,1)
    X=\{F1,F2,F3\};
    Y=\{\{a^3,-1\}\};
    r=0
    r1=1;
    F=\{ \}
    \textcolor{blue}{while} r==0
        disp([\textcolor{violet}{'Make #'},num2str(r1),\textcolor{violet}{' reduction'}])
        r1=r1+1;
        disp([\textcolor{violet}{'Polynomial to reduce'}])
        P
        P11=Redpol(P,X);
        disp([\textcolor{violet}{'reduced polynomial 1'}])
        P11
        P1=Redpol2(P11,Y);
        disp([\textcolor{violet}{'reduced polynomial 2'}])
        P1
        F\{r1\}=P1
        \textcolor{blue}{if} 0==IgualPol(P,P1)
            P=P1;
        \textcolor{blue}{else}
            r=1
        \textcolor{blue}{end}
    \textcolor{blue}{end}
    
    >> P1= struct with fields:
            a: 2
            b:\{-(s^3 + 1)/(a^2*s)  -(s^3 + 1)/(a*s)\}
            c:\{'322'  '223'\} 
\end{Verbatim}

where \texttt{K1} is the product of $f_1$ and $y$,  \texttt{K2} is the product of $x$ and $f_2$, and \texttt{P} is the difference of both products, \texttt{X} is the reduction system, and \texttt{Y} is the way the condition is written $a^3=-1$. The functions \texttt{multpol}, \texttt{Sumpol} and other are defined in \cite{Herrera2024}. The result returned by the code is \texttt{P1}, i.e., $(f_1,f_2)_{w_1}\equiv-\left( \frac{1+s^3}{sa^2}\right)zy^2-\left( \frac{1+s^3}{sa}\right)y^2z$. According to the established monomial order, transforming this polynomial into a monic one and simplifying in the same way, the new polynomial $f_4:=y^2z+\frac{1}{a}zy^2$ is created as
\begin{Verbatim}[commandchars=\\\{\}]
    F4.a=2;
    F4.b=\{1,1/a\};
    F4.c=\{\textcolor{violet}{'223'},\textcolor{violet}{'322'}\}; 
\end{Verbatim}
The next composition to calculate is between $f_1$ and $f_3$, whence the only part that is modified in the code is \texttt{K1}, \texttt{K2} and \texttt{X}. This is done as follows:
\begin{Verbatim}[commandchars=\\\{\}]
    K1=multpol(F1,P3)  
    K2=multpol(P1,F3) 
    P=Sumpol(K1,K2,1)
    X=\{F1,F2,F3,F4\};

    >> P1= struct with fields:
            a: 2
            b:\{-(s^3 + 1)/s  -(s^3 + 1)/(a*s)\}
            c:\{'233'  '332'\}
\end{Verbatim}

The result returned by the code is \texttt{P1}, that is, $(f_1,f_3)_{w_3}=-\left(\frac{s^3+1}{s} \right)yz^2-\left(\frac{s^3+1}{as} \right)z^2y$. Transforming this polynomial into a monic one and simplifying in the same way, the new polynomial $f_5:=yz^2+\frac{1}{a}z^2y$ is created as
\begin{Verbatim}[commandchars=\\\{\}]
    F5.a=2;
    F5.b=\{1,1/a\};
    F5.c=\{\textcolor{violet}{'233'},\textcolor{violet}{'332'}\}; 
\end{Verbatim}

Finally, we calculate the composition between $f_1$ and $f_1$:
\begin{Verbatim}[commandchars=\\\{\}]
    K1=multpol(F1,P1)  
    K2=multpol(P1,F1) 
    P=Sumpol(K1,K2,1)
    X=\{F1,F2,F3,F4,F5\};

    >> P1= struct with fields:
            a: 0 
            b: \{ \}
            c: \{ \}
\end{Verbatim}
Just as we saw in Example \ref{ExampleSklyanin}, the other compositions cannot be calculated. Now, we will perform the second iteration of Shirshov's algorithm and verify that the set $S_1:=S\cup \{f_4,f_5 \}$ is a GS basis. As we saw, there are only three possible compositions and they are also trivial.
\begin{Verbatim}[commandchars=\\\{\}]
    K1=multpol(F2,multpol(P2,P3))  
    K2=multpol(P1,F4) 
    P=Sumpol(K1,K2,1)
    X=\{F1,F2,F3,F4,F5\};

    >> P1= struct with fields:
            a: 0 
            b: \{ \}
            c: \{ \}
\end{Verbatim}
\begin{Verbatim}[commandchars=\\\{\}]
    K1=multpol(F2,multpol(P3,P3))  
    K2=multpol(P1,F5) 
    P=Sumpol(K1,K2,1)
    X=\{F1,F2,F3,F4,F5\};

    >> P1= struct with fields:
            a: 0 
            b: \{ \}
            c: \{ \}
\end{Verbatim}
\begin{Verbatim}[commandchars=\\\{\}]
    K1=multpol(F4,P3)  
    K2=multpol(P2,F5) 
    P=Sumpol(K1,K2,1)
    X=\{F1,F2,F3,F4,F5\};

    >> P1= struct with fields:
            a: 0 
            b: \{ \}
            c: \{ \}
\end{Verbatim}

Therefore, as verified in Example \ref{ExampleSklyanin} the set $S^c$ is a GS basis for the ideal ${\rm Id}(S)$.

\section{Presenting Double Extension Regular Algebras via Gr\"obner-Shirshov Bases}\label{GS bases for 14641}

Note that each family of double extensions in Tables \ref{tab:1-DOE}, \ref{tab:2-DOE}, \ref{tab:3-DOE} and \ref{tab:4-DOE} can be expressed as a quotient of the free algebra $\Bbbk \{ y_1, y_2, x_1, x_2 \}$ by the ideal of relations ${\rm Id}(S)$. To calculate the GS bases of these families, the following procedure is followed. Consider the alphabet \( X = \{y_1, y_2, x_1, x_2\} \) equipped with the \texttt{deglex} order on the monoid \( X^* \), where \( y_1 \prec y_2 \prec x_1 \prec x_2 \).Then, we select the relations defining the double extension as polynomials, arranging them so that the leading coefficients of the main words are \( 1 \). Taking the set \( S \) to consist of all monic polynomials, we compute all possible compositions (see Definition~\ref{Def.Composition}) and, in accordance with Definition~\ref{Def.GSB}, we determine whether \( S \) forms a GS basis. For the following algebras, it was not necessary to apply Shirshov's algorithm, as we were able to find the GS bases using only the definition.

\subsection{Case I} Let us consider the relations of the double extension $\mathbb{A}$
\begin{align*}
    x_2x_1 &\ = x_1 x_2, &  y_1x_1 &\ = x_1y_1,  & y_2x_1 &\ = x_1y_2, \\
    y_2y_1 &\ = y_1y_2+y_{1}^{2}, &  y_1x_2&\ =x_2y_1+x_1y_2, & y_2x_2&\ =-2x_2y_1-x_1y_2+x_2y_2.
\end{align*}

By taking the principal leadings of the relations with respect to the monomial order defined above, we have the polynomials 
\begin{align*}
     f_1&\ = x_2x_1- x_1 x_2, & f_4&\ = y_2y_1-y_1y_2-y_{1}^{2},\\
     f_2&\ =x_1y_1-y_1x_1, & f_5&\ =x_2y_1-y_1x_2+x_1y_2,\\
     f_3&\ =x_1y_2-y_2x_1, & f_6&\ = x_2y_2-y_2x_2-2x_2y_1-x_1y_2.
 \end{align*}
 
Note that $f_5$ and $f_6$ can be reduced as
\begin{align*}
    f_5& =x_2y_1-y_1x_2+x_1y_2\equiv x_2y_1-y_1x_2+y_2x_1,\\
    f_6& =x_2y_2-y_2x_2-2x_2y_1-x_1y_2\\
       &\equiv x_2y_2-y_2x_2-2(y_1x_2-y_2x_1)-y_2x_1\\
       &= x_2y_2-y_2x_2-2y_1x_2+y_2x_1
\end{align*}
 
Thus, we obtain that the set $S$ is formed by the following monic polynomials:
\begin{equation*}
   S=  \begin{Bmatrix}
x_2x_1- x_1 x_2 &  x_1y_1-y_1x_1, & x_1y_2-y_2x_1 , \\
y_2y_1-y_1y_2-y_{1}^{2}, &  x_2y_1-y_1x_2+y_2x_1, & x_2y_2-y_2x_2-2y_1x_2+y_2x_1
\end{Bmatrix}
\end{equation*}

Next, we proceed to calculate all the possible ambiguities and their respective compositions in case they exist. Just as in the previous section, the only compositions that can be formed are such that the leading terms of the polynomials end and begin with the same letter.  Using the code, it is easy to see that the compositions $ (f_1,f_2)_{w_1}$, $(f_1,f_3)_{w_2}$, $ (f_3,f_4)_{w_3}$ and $  (f_6,f_4)_{w_4}$ are the only  that exist, and they are trivial.
Hence, the set $S$ is formed by monic polynomials is a GS basis for the ideal ${ \rm Id}(S)$.

\subsection{Case II}
For the double extension $\mathbb{B}$, we have the relations:
\begin{align*}
    x_2x_1 &\ = px_1 x_2, &  y_2y_1 &\ = py_1y_2,  & y_1x_1 &\ = x_2y_2, \\
    y_1x_2&\ =x_1y_2,  & y_2x_1 &\ = -x_2y_1, &  y_2x_2&\ =x_1y_1,
\end{align*}

where $p^2 = -1$. Following the same procedure as above, but with these new relationships, we obtain that the set $S$ is formed by the following monic polynomials:
\begin{align*}  
f_1&\ = x_2x_1- px_1 x_2, &  f_2&\ = y_2y_1-py_1y_2, & f_3&\ = x_2y_2-y_1x_1 , \\
f_4&\ =x_1y_2-y_1x_2, &  f_5&\ =x_2y_1+y_2x_1, & f_6&\ = x_1y_1-y_2x_2
\end{align*}
 
 Next, we proceed to calculate all the possible ambiguities and their respective compositions in case they exist. So the only possible compositions $(f_1,f_4)_{w_1}$, $(f_1,f_6)_{w_2}$, $(f_3,f_2)_{w_3}$ and $ (f_4,f_2)_{w_4}$ are trivial. Hence, the set $S$ is formed by monic polynomials is a GS basis for the ideal ${ \rm Id}(S)$. 

\subsection{Case III} 

The relations of the double extension $\mathbb{D}$ are given by:
\begin{align*}
    x_2x_1 &\ = -x_1 x_2, & y_2y_1 &\ = py_1y_2,  & y_1x_1 &\ = -px_1y_1, \\
   y_1x_2&\ =-p^2x_2y_1+x_1y_2,  & y_2x_1 &\ = px_1y_2, & y_2x_2&\ =x_1y_1+x_2y_2,
\end{align*}

where $p\in \{-1,1\}$. We obtain that the set $S$ is formed by the following monic polynomials:
\begin{align*} 
f_1&\ = x_2x_1+x_1 x_2 & f_2&\ = y_2y-py_1y_2, & f_3&\ = x_1y_1+py_1x_1 , \\
f_4&\ =x_2y_1+y_1x_2-\frac{1}{p}y_2x_1, &  f_5&\ =x_1y_2-\frac{1}{p}y_2x_1, & f_6&\ = x_2y_2-y_2x_2+y_1x_1.
\end{align*}
Note that $f_4$ can be reduced by $f_5$. When $p=1$ or $p=-1$, the set only differs in a some signs, and this does not alter the choice of the leading term for each value of $p$. Therefore, in any case, the compositions that can be formed are $(f_1,f_3)_{w_1}$, $(f_1,f_5)_{w_2}$, $(f_5,f_2)_{w_3}$ and $(f_6,f_2)_{w_4}$ and are also trivial. Therefore, $S$ is a GS basis for the algebra $\mathbb{D}$ when $p\in \{-1,1\}$. 

\subsection{Case IV}
We establish the relations for the double extension  $\mathbb{E}$: 
\begin{align*}
    x_2x_1 &\ = -x_1 x_2, & y_2y_1 &\  = py_1y_2, & y_1x_1&\ = x_1y_2+x_2y_2, \\
   y_1x_2&\  =x_1y_2-x_2y_2, & y_2x_1 &\ = -x_1y_1+x_2y_1, & y_2x_2&\  =x_1y_1+x_2y_1,
\end{align*}
where $p^2=-1$. Similarly, based on these relationships, we transform them into monic polynomials as follows:
\begin{align*}
    f_1&\ =x_2x_1 +x_1 x_2, & f_2&\ =y_2y_1 -py_1y_2,  & f_3&\ = x_2y_2+x_1y_2-y_1x_1,\\
   f_4&\ =x_2y_2-x_1y_2+y_1x_2,  & f_5&\ =x_2y_1-x_1y_1-y_2x_1, & f_6&\ =x_2y_1+x_1y_1-y_2x_2.
\end{align*}
Note that $f_4$ can be reduced by $f_4$, that is
\begin{align*}
    f_4&=x_2y_2-x_1y_2+y_1x_2\\
    &\equiv (-x_1y_2+y_1x_1)-x_1y_2+y_1x_2\\
    &=-2x_1y_2+y_1x_1+y_1x_2\\
    &=x_1y_2-\frac{1}{2}y_1x_1-\frac{1}{2}y_1x_2.
\end{align*}
Now, $f_3$ can be reduced by the new $f_4$ as follows:
\begin{align*}
    f_3&= x_2y_2+x_1y_2-y_1x_1\\
    &\equiv x_2y_2+(\frac{1}{2}y_1x_1+\frac{1}{2}y_1x_2)-y_1x_1\\
    &=x_2y_2-\frac{1}{2}y_1x_1+\frac{1}{2}y_1x_2.
\end{align*}
Similarly, $f_6$ can be reduced using $f_5$
\begin{align*}
    f_6&=x_2y_1+x_1y_1-y_2x_2\\
    &\equiv (x_1y_1+y_2x_1)+x_1y_1-y_2x_2\\
    &=2x_1y_1+y_2x_1-y_2x_2\\
    &=x_1y_1-\frac{1}{2}y_2x_1-\frac{1}{2}y_2x_2,
\end{align*}
and now, $f_5$ can be reduced by means of the new $f_6$
\begin{align*}
    f_5&=x_2y_1-x_1y_1-y_2x_1\\
    &\equiv x_2y_1+(\frac{1}{2}y_2x_1-\frac{1}{2}y_2x_2)-y_2x_1\\
    &=x_2y_1-\frac{1}{2}y_2x_1-\frac{1}{2}y_2x_2.
\end{align*}
Similarly, we have that the following compositions $(f_1,f_4)_{w_1}$, $(f_1,f_6)_{w_2}$, $(f_3,f_2)_{w_3}$ and $(f_4,f_2)_{w_4}$ are the only ones that exist and are trivial. Therefore, the set
\begin{equation*}
   S=  \begin{Bmatrix}
 x_2x_1 +x_1 x_2, &  y_2y_1 -py_1y_2, \\
 x_2y_2-\frac{1}{2}y_1x_1+\frac{1}{2}y_1x_2, &
x_1y_2-\frac{1}{2}y_1x_1-\frac{1}{2}y_1x_2,\\
x_2y_1-\frac{1}{2}y_2x_1-\frac{1}{2}y_2x_2, &  x_1y_1-\frac{1}{2}y_2x_1-\frac{1}{2}y_2x_2
\end{Bmatrix},
\end{equation*}
 is a GS basis for the ideal ${ \rm Id}(S)$. 

\subsection{Case V}
Let us consider the relations of the double extension $\mathbb{G}$ for $p\not = 0, \pm 1$ and $\alpha \neq 0$:
\begin{align*}
    x_2x_1 &\ = x_1 x_2, & y_2y_1 &\ = py_1y_2, \\
    y_1x_1&\ = px_1y_1, &   y_2x_1&\ = px_1y_2, \\
   y_1x_2&\ =px_1y_1+p^2x_2y_1+x_1y_2, & y_2x_2&\ =\alpha x_1y_1-x_1y_2+x_2y_2.
\end{align*}

Therefore, $S$ is the set of the following monic polynomials:
\begin{align*}
    f_1&\ =x_2x_1 -x_1 x_2, & f_2&\ =y_2y_1 -py_1y_2,  \\
    f_3&\ = x_1y_1-\frac{1}{p}y_1x_1 , & f_4&\ =x_1y_2-\frac{1}{p}y_2x_1,\\
    f_5&\ = x_2y_1-\frac{1}{p^2}y_1x_2+\frac{1}{p^2}y_1x_1+\frac{1}{p^3}y_2x_1, & f_6&\ = x_2y_2+\frac{\alpha}{p} x_1y_1-\frac{1}{p}y_2x_1-y_2x_2.
\end{align*}

Thus, the possible compositions are $(f_1,f_4)_{w_1}$, $(f_1,f_3)_{w_2}$, $(f_4,f_2)_{w_3}$ and $(f_6,f_2)_{w_4}$ and, moreover, they are trivial. Thus, the set $S$ defined by the above polynomials is a GS basis the ideal ${ \rm Id}(S)$.

\subsection{Case VI} Let us consider the relations of the double extension $\mathbb{K}$
\begin{align*}
   x_2x_1 &\ = q x_1 x_2, &  y_2y_1&\ = -y_1y_2, &\ y_1x_1&\ = x_1y_1,\\
  y_1x_2&\ =x_2y_2, &  y_2x_1&\ = x_1y_2, &\ y_2x_2&\ =\alpha x_2y_1, 
\end{align*}
where $q\in \{-1,1\}$ and $\alpha \not = 0$. Thus, the set $S$ is formed by the following monic polynomials:
\begin{align*}
    f_1 &\ =x_2x_1- q x_1 x_2, & f_2&\ = y_2y_1 +y_1y_2, & f_3 &\ = x_1y_1-y_1x_1,\\
    f_4&\ =x_2y_2-y_1x_2, & f_5&\ =x_1y_2-y_2x_1, & f_6&\ = x_2y_1-\frac{1}{\alpha}y_2x_2.
\end{align*}
Thus, the possible compositions in the set $S$ are $(f_1,f_3)_{w_1}$, $(f_1,f_5)_{w_2}$, $(f_4,f_2)_{w_3}$, and $(f_5,f_2)_{w_4}$ and are trivial. Thus, the set $S$ defined by the above polynomials is  a GS basis  for the ideal ${ \rm Id}(S)$.

\subsection{Case VII} 
For the double extension $\mathbb{L}$, we have the relations:
\begin{align*}
    x_2x_1 &\ = qx_1 x_2, & y_2y_1 &\ = -y_1y_2, & y_1x_1 &\ = \alpha x_1y_2,\\
     y_1x_2&\ =x_2y_2, & y_2x_1 &\ = \alpha x_1y_1, &\ y_2x_2&\ =x_2y_1,
\end{align*}

where $q\in \{-1,1\}$ and $\alpha \not = 0$.  Thus, the set $S$ is formed by the following monic polynomials:
\begin{align*}
    f_1&\ =x_2x_1-qx_1 x_2, & f_2&\ =y_2y_1 +y_1y_2,& f_3&\ =x_1y_2-\frac{1}{\alpha}y_1x_1, \\
    f_4&\ =x_2y_2-y_1x_2, & f_5&\ = x_1y_1-\frac{1}{\alpha}y_2x_1, & f_6&\ =x_2y_1-y_2x_2.
\end{align*}

Thus, the possible compositions in the set $S$ are $(f_1,f_3)_{w_1}$, $(f_1,f_5)_{w_2}$, $(f_3,f_2)_{w_3}$, and $(f_4,f_2)_{w_4}$ and are trivial. Thus, the set $S$ defined by the above polynomials is a basis for GS for the ideal ${ \rm Id}(S)$.

\subsection{Case VIII} 
The double extension $\mathbb{Q}$ satisfies the following relations:
\begin{align*}
    x_2x_1 &= -x_1 x_2, &\ y_2y_1 &= -y_1y_2, &\ y_1x_1 &= x_1y_2, \\
    y_2x_1 &= -x_1y_1, &\ y_1x_2&=x_1y_1+x_2y_1+x_1y_2, &\ y_2x_2&=x_1y_1-x_1y_2+x_2y_2.
\end{align*}
Therefore, the set $S$ consists of the following monic polynomials:
\begin{align*}
    f_1&=x_2x_1 +x_1 x_2, &\ f_2&=y_2y_1+y_1y_2, \\
    f_3&=x_1y_2+y_1x_2, &\ f_4&= x_2y_1+y_2x_1,\\
    f_5&= x_2y_1+y_1x_1-y_2x_1-y_1x_2, &\ f_6&= x_2y_2-y_2x_1-y_1x_1-y_2x_2.
\end{align*}
Thus, the possible compositions are $(f_1,f_4)_{w_1}$, $(f_1,f_3)_{w_2}$, $(f_3,f_2)_{w_3}$ and $(f_6,f_2)_{w_4}$ and, moreover, they are trivial. Thus, the set $S$ defined by the above polynomials is a GS basis the ideal ${ \rm Id}(S)$.

\subsection{Case IX}
The defining relations for the double extension $\mathbb{R}$ are:
\begin{align*}
    x_2x_1 &= -x_1 x_2, &\ y_2y_1 &= -y_1y_2, &\ y_1x_2&=x_1y_2, \\
    y_2x_1 &= x_2y_1, &\ 
    y_1x_1 &= x_1y_1+x_2y_1+x_1y_2, &\ 
    y_2x_2&=-x_2y_1-x_1y_2+x_2y_2.
\end{align*}
It follows that the following monic polynomials form the set $S$:
\begin{align*}
    f_1&=x_2x_1 +x_1 x_2, &\ f_2&=y_2y_1+y_1y_2, \\
    f_3&= x_1y_2-y_1x_2, &\ f_4&=x_2y_1-y_2x_1,\\
    f_5&=x_1y_1+y_2x_1+y_1x_2-y_1x_1, &\ f_6&=x_2y_2-y_2x_1-y_1x_2-y_2x_2.
\end{align*}
Thus, the possible compositions are $(f_1,f_5)_{w_1}$, $(f_1,f_3)_{w_2}$, $(f_3,f_2)_{w_3}$ and $(f_6,f_2)_{w_4}$ and, moreover, they are trivial. Thus, the set $S$ defined by the above polynomials is a GS basis the ideal ${ \rm Id}(S)$.

\subsection{Case X}
For the double extension $\mathbb{V}$, we have the relations:
\begin{align*}
    x_2x_1 &= x_1 x_2, &\ y_2y_1 &= -y_1y_2, &\ y_1x_2&= x_2y_1, \\
    y_2x_2&=x_2y_2, &\ 
    y_1x_1 &= x_2y_1+x_1y_2, &\ 
    y_2x_1 &= -x_1y_1+x_2y_1.
\end{align*}
Therefore, the set $S$ consists of the following monic polynomials:
\begin{align*}
    f_1&=x_2x_1 -x_1 x_2, &\ f_2&=y_2y_1+y_1y_2, &\ f_3&= x_2y_1-y_1x_2, \\
    f_4&=x_2y_2-y_2x_2, &\ f_5&=x_1y_2+y_1x_2-y_1x_1, &\ f_6&=x_1y_1+y_2x_1-y_1x_2.    
\end{align*}
Thus, the possible compositions are $(f_1,f_6)_{w_1}$, $(f_1,f_5)_{w_2}$, $(f_4,f_2)_{w_3}$ and $(f_5,f_2)_{w_4}$ and, moreover, they are trivial. Thus, the set $S$ defined by the above polynomials is a GS basis the ideal ${ \rm Id}(S)$. 

\subsection{Case XI}
The following relations hold in the double extension $\mathbb{X}$:
\begin{align*}
    x_2x_1 &= x_1 x_2, &\ y_2y_1 &= -y_1y_2, &\ y_1x_1 &= x_1y_2, \\
    y_2x_1 &= x_1y_1, &\ 
    y_1x_2&= x_1y_2+x_2y_2, &\ 
    y_2x_2&=x_1y_1+x_2y_1.
\end{align*}
Therefore, we define the set $S$ as the collection of these monic polynomials:
\begin{align*}
    f_1&=x_2x_1 -x_1 x_2, &\ f_2&=y_2y_1+y_1y_2, &\ f_3&= x_1y_2-y_1x_1, \\
    f_4&= x_1y_1-y_2x_1, &\ f_5&=x_2y_2+y_1x_1-y_1x_2, &\ f_6&=x_2y_1+y_2x_1-y_2x_2.
\end{align*}
Thus, the possible compositions are $(f_1,f_3)_{w_1}$, $(f_1,f_4)_{w_2}$, $(f_3,f_2)_{w_3}$ and $(f_5,f_2)_{w_4}$ and, moreover, they are trivial. Thus, the set $S$ defined by the above polynomials is a GS basis the ideal ${ \rm Id}(S)$. 

\subsection{Case XII}
For the double extension $\mathbb{Y}$, we have the relations:
\begin{align*}
    x_2x_1 &= x_1 x_2, &\ y_2y_1 &= -y_1y_2, &\ y_1x_1 &= x_1y_1, \\
    y_2x_1 &= x_1y_2, &\ 
    y_1x_2&= \alpha x_1y_1-x_2y_1+x_1y_2, &\ 
    y_2x_2&=x_1y_1+\alpha x_1y_2-x_2y_2,
\end{align*}
where $\alpha \in \Bbbk$. It follows that the following monic polynomials form the set $S$:
\begin{align*}
    f_1&=x_2x_1 -x_1 x_2, &\ f_2&=y_2y_1+y_1y_2, \\
    f_3&=x_1y_1-y_1x_1 &\ f_4&= x_1y_2-y_2x_1\\
    f_5&=x_2y_1-y_2x_1-\alpha y_1x_1+y_1x_2 &\ f_6&=x_2y_2-y_1x_1-\alpha y_2x_1+y_2x_2
\end{align*}
Thus, the possible compositions are $(f_1,f_3)_{w_1}$, $(f_1,f_4)_{w_2}$, $(f_4,f_2)_{w_3}$ and $(f_6,f_2)_{w_4}$ and, moreover, they are trivial. Thus, the set $S$ defined by the above polynomials is a GS basis the ideal ${ \rm Id}(S)$. 

\begin{remark} For all the algebras studied previously, we have that the set ${ \rm Irr}(S)=\{y_1^{i_1}y_2^{i_2}x_1^{i_3}x_2^{i_4} \mid i_1,i_2,i_3,i_4\geq 0 \}$ forms a linear basis, by the Composition-Diamond Lemma for free algebras \ref{CDL}. Furthermore, by the Proposition \ref{PBWyGSB}, the set ${ \rm Irr}(S)$ is a PBW basis for these algebras. For the remaining 14 algebras, it is necessary to perform more than one iteration of Shirshov's algorithm, and in some cases this can be tedious due to the form of the polynomial coefficients.
\end{remark}

\begin{landscape}
\begin{table}[h]
\begin{center}
\begin{tabular}{|c|c|c|c|}
\hline
Double extension & Relations defining the double extension & Conditions & GS bases finite \\ 
\hline
\multirow{3}{*}{$\mathbb{A}$} & $x_2x_1 = x_1 x_2,\quad y_2y_1 = y_1y_2+y_{1}^{2}$, & &  \\
             &$y_1x_1 = x_1y_1,  \quad y_1x_2=x_2y_1+x_1y_2$, &  & \checkmark \\
             & $y_2x_1 = x_1y_2, \quad y_2x_2=-2x_2y_1-x_1y_2+x_2y_2$.  & & \\
             \hline
\multirow{3}{*}{$\mathbb{B}$} &  $x_2x_1 = px_1 x_2, \quad y_2y_1 = py_1y_2$, & &  \\
             &  $y_1x_1 = x_2y_2,  \quad y_1x_2=x_1y_2$,  & $p^2 = -1$ &\checkmark \\
             &  $y_2x_1 = -x_2y_1, \quad y_2x_2=x_1y_1$. & & \\ \hline
\multirow{5}{*}{$\mathbb{C}$} & $x_2x_1 = px_1 x_2, \quad y_2y_1 = py_1y_2$, & & \\
             &  $y_1x_1 = -x_1y_1+p^2x_2y_1+x_1y_2-px_2y_2$, & & \\
             &  $y_1x_2 =-px_1y_1+x_2y_1+x_1y_2-px_2y_2$, & $p^2+p+1 = 0$ &  \\
             &  $y_2x_1 = -px_1y_1-2p^2x_2y_1+px_1y_2-px_2y_2$, & &  \\
             &  $y_2x_2 =-px_1y_1+p^2x_2y_1+x_1y_2-x_2y_2$. & & \\ \hline
\multirow{3}{*}{$\mathbb{D}$} &  $x_2x_1 = -x_1 x_2, \quad y_2y_1 = py_1y_2$, & & \\
             & $y_1x_1 = -px_1y_1,  \quad y_1x_2=-p^2x_2y_1+x_1y_2$, & $p\in \{-1,1\}$ & \checkmark \\
             &  $y_2x_1 = px_1y_2, \quad y_2x_2=x_1y_1+x_2y_2$. & & \\ \hline
\multirow{3}{*}{$\mathbb{E}$} &  $x_2x_1 = -x_1 x_2, \quad y_2y_1 = py_1y_2$, & & \\
             & $y_1x_1 = x_1y_2+x_2y_2,  \quad y_1x_2=x_1y_2-x_2y_2$, &  $p^2=-1$ &\checkmark \\
             &   $y_2x_1 = -x_1y_1+x_2y_1, \quad y_2x_2=x_1y_1+x_2y_1$. & & \\ \hline
\multirow{5}{*}{$\mathbb{F}$} &  $x_2x_1 = -x_1 x_2, \quad y_2y_1 = py_1y_2$, & &   \\
             & $y_1x_1 = -x_1y_1-px_2y_1+x_1y_2-x_2y_2$, & & \\
             & $y_1x_2 =-px_1y_1+x_2y_1+x_1y_2+x_2y_2$, & $p^2=-1$ & \\
             &  $y_2x_1 = -px_1y_1+px_2y_1+px_1y_2+x_2y_2 $, & & \\
             & $y_2x_2 =-px_1y_1-px_2y_1+x_1y_2-px_2y_2$. & &  \\ \hline
\multirow{5}{*}{$\mathbb{G}$} & $ x_2x_1 = x_1 x_2, \quad y_2y_1 = py_1y_2$, & & \\
             & $y_1x_1 = px_1y_1, $ &  $p\not = 0, \pm 1$ & \\ 
             & $y_1x_2=px_1y_1+p^2x_2y_1+x_1y_2$  &   and &\checkmark \\
             &  $y_2x_1 = px_1y_2, $ &  $\alpha \not = 0$ & \\
             & $y_2x_2=\alpha x_1y_1-x_1y_2+x_2y_2$. & & \\ \hline
    \end{tabular}
    \caption{Double extensions}
    \label{tab:1-DOE}
    \end{center}
\end{table}
\end{landscape}

\begin{landscape}
    \begin{table}[h]
    \centering
    \begin{tabular}{|c|c|c|c|}
\hline
Double extension & Relations defining the double extension & Conditions & GS bases finite \\ 
\hline
\multirow{3}{*}{$\mathbb{H}$} & $ x_2x_1 = x_1 x_2+x_1^2, \quad y_2y_1 = -y_1y_2$, & &  \\
             & $y_1x_1 = x_1y_2,  \quad y_1x_2=\alpha x_1y_2+x_2y_2$, & $\alpha \not = 0$ &\checkmark \\
             & $y_2x_1 = x_1y_1, \quad y_2x_2=\alpha x_1y_1+x_2y_1$. & & \\ \hline
\multirow{5}{*}{$\mathbb{I}$} & $ x_2x_1 = qx_1 x_2, \quad y_2y_1 = -y_1y_2$, & &  \\
             & $y_1x_1 = -qx_1y_1-qx_2y_1+x_1y_2-qx_2y_2,$ & & \\
             & $y_1x_2 = x_1y_1+x_2y_1+x_1y_2-qx_2y_2$, &  $q^2=-1$  & \\
             &  $y_2x_1 = x_1y_1+qx_2y_1+qx_1y_2-qx_2y_2,$ & &\\
             & $ y_2x_2 =-x_1y_1-qx_2y_1+x_1y_2-x_2y_2$. & &  \\ \hline
\multirow{3}{*}{$\mathbb{J}$} & $ x_2x_1 = qx_1 x_2, \quad y_2y_1 = -y_1y_2$, & &  \\
             & $y_1x_1 = x_2y_1+x_2y_2,  \quad y_1x_2=-x_1y_1+x_1y_2$, &  $q^2=-1$ & \\
             & $y_2x_1 = x_2y_1-x_2y_2, \quad y_2x_2=x_1y_1+x_1y_2$. & & \\ \hline
\multirow{3}{*}{$\mathbb{K}$} & $ x_2x_1 = qx_1 x_2, \quad y_2y_1 = -y_1y_2$, &  $q\in \{-1,1\}$ & \\
             & $y_1x_1 = x_1y_1,  \quad y_1x_2=x_2y_2$, &  and &\checkmark  \\
             & $y_2x_1 = x_1y_2, \quad y_2x_2=\alpha x_2y_1$. & $\alpha \not = 0$ & \\ \hline
\multirow{3}{*}{$\mathbb{L}$} & $ x_2x_1 = qx_1 x_2, \quad y_2y_1 = -y_1y_2$, & $q\in \{-1,1\}$ &  \\
             & $y_1x_1 = \alpha x_1y_2,  \quad y_1x_2=x_2y_2$ &  and &\checkmark \\
             & $y_2x_1 = \alpha x_1y_1, \quad y_2x_2=x_2y_1$. & $\alpha  \not = 0$  & \\ \hline
\multirow{3}{*}{$\mathbb{M}$} & $ x_2x_1 = -x_1 x_2, \quad y_2y_1 = -y_1y_2$, & &   \\
             &$y_1x_1 = x_2y_1+x_1y_2,  \quad y_1x_2=\alpha x_1y_1-x_2y_2$, & $\alpha \not = 1$ & \\
             & $y_2x_1 = x_1y_1-x_2y_2, \quad y_2x_2=-x_2y_1-\alpha x_1y_2$. & &  \\ \hline
\multirow{3}{*}{$\mathbb{N}$} & $ x_2x_1 = -x_1 x_2, \quad y_2y_1 = -y_1y_2$, & &  \\
             &$y_1x_1 = -\beta x_2y_1+\alpha x_2y_2,  \quad y_1x_2=\beta x_1y_1+\alpha x_1y_2$,  & $\alpha^2 \not = \beta^2$ &\\
             & $y_2x_1 = \alpha x_2y_1-\beta x_2y_2, \quad y_2x_2=\alpha x_1y_1+\beta x_1y_2$. & &  \\ \hline
\multirow{3}{*}{$\mathbb{O}$} &$ x_2x_1 = -x_1 x_2, \quad y_2y_1 = -y_1y_2$, & &  \\
             & $y_1x_1 = x_1y_1+\alpha x_2y_2,  \quad y_1x_2=-x_2y_1+x_1y_2$, & $\alpha \not = -1$ &\\
             & $y_2x_1 = \alpha x_2y_1-x_1y_2, \quad y_2x_2=x_1y_1+x_2y_2$. & &  \\ \hline
        \end{tabular}
         \caption{Double extensions}
    \label{tab:2-DOE}
    \end{table}
\end{landscape}

\begin{landscape}
\begin{table}[h]
    \centering
    \begin{tabular}{|c|c|c|c|}
\hline
Double extension & Relations defining the double extension & Conditions & GS bases finite \\ 
\hline
\multirow{3}{*}{$\mathbb{P}$} &$ x_2x_1 = -x_1 x_2, \quad y_2y_1 = -y_1y_2$, & &  \\
             & $y_1x_1 = x_1y_2+\alpha x_2y_2,  \quad y_1x_2=x_1y_2+x_2y_2$, & $\alpha \not = -1$ & \\
             &  $y_2x_1 = x_1y_1-\alpha x_2y_1, \quad y_2x_2=-x_1y_1+x_2y_1$. & &   \\ \hline
\multirow{3}{*}{$\mathbb{Q}$} &$x_2x_1 = -x_1 x_2,\quad y_2y_1 = -y_1y_2$, & &    \\
             & $y_1x_1 = x_1y_2,  \quad y_1x_2=x_1y_1+x_2y_1+x_1y_2$, & & \checkmark\\
             & $y_2x_1 = -x_1y_1, \quad y_2x_2=x_1y_1-x_1y_2+x_2y_2$. & &  \\ \hline
\multirow{3}{*}{$\mathbb{R}$} & $x_2x_1 = -x_1 x_2,\quad y_2y_1 = -y_1y_2$, & &  \\
             & $y_1x_1 = x_1y_1+x_2y_1+x_1y_2,  \quad y_1x_2=x_1y_2$, &  & \checkmark\\
             & $y_2x_1 = x_2y_1, \quad y_2x_2=-x_2y_1-x_1y_2+x_2y_2$. & &  \\ \hline
\multirow{5}{*}{$\mathbb{S}$} & $x_2x_1 = -x_1 x_2,\quad y_2y_1 = -y_1y_2$, & &  \\
             &$y_1x_1 = -x_1y_1+x_2y_1+x_1y_2+x_2y_2, $ & & \\
             &$ y_1x_2 = x_1y_1-x_2y_1+x_1y_2+x_2y_2$, & & \\
             & $y_2x_1 = x_1y_1+x_2y_1-x_1y_2+x_2y_2,$ & & \\
             &$y_2x_2 =x_1y_1+x_2y_1+x_1y_2-x_2y_2$. & &  \\ \hline
\multirow{5}{*}{$\mathbb{T}$} &$x_2x_1 = -x_1 x_2,\quad y_2y_1 = -y_1y_2$, & &  \\
             &$y_1x_1 = -x_1y_1+x_2y_1+x_1y_2+x_2y_2, $ & &\\
             &$ y_1x_2 = x_1y_1-x_2y_1+x_1y_2+x_2y_2$, & & \\
             &  $y_2x_1 = x_1y_1+x_2y_1+x_1y_2-x_2y_2, $ & & \\
             &$ y_2x_2 =x_1y_1+x_2y_1-x_1y_2+x_2y_2$. & &  \\ \hline
\multirow{5}{*}{$\mathbb{U}$} & $x_2x_1 = -x_1 x_2,\quad y_2y_1 = -y_1y_2$, & &  \\
             & $y_1x_1 = -x_1y_1+x_2y_1+x_1y_2+x_2y_2, $ & &\\
             &$ y_1x_2 = x_1y_1+x_2y_1+x_1y_2-x_2y_2$, & & \\
             & $y_2x_1 = x_1y_1+x_2y_1-x_1y_2+x_2y_2, $ & &\\
             &$ y_2x_2 =x_1y_1-x_2y_1+x_1y_2+x_2y_2$. & &  \\ \hline
\multirow{3}{*}{$\mathbb{V}$} & $x_2x_1 = x_1 x_2,\quad y_2y_1 = -y_1y_2$, & &  \\
             &$y_1x_1 = x_2y_1+x_1y_2,  \quad y_1x_2= x_2y_1$, & & \checkmark \\
             &  $y_2x_1 = -x_1y_1+x_2y_1, \quad y_2x_2=x_2y_2$. & &  \\ \hline
    \end{tabular}
    \caption{Double extensions}
    \label{tab:3-DOE}
\end{table}

\end{landscape}

\begin{landscape}
    \begin{table}[h]
        \centering
        \begin{tabular}{|c|c|c|c|}
          \hline
Double extension & Relations defining the double extension & Conditions & GS bases finite \\ 
\hline
\multirow{3}{*}{$\mathbb{W}$} & $x_2x_1 = x_1 x_2,\quad y_2y_1 = -y_1y_2$, & &  \\
             &$y_1x_1 = \alpha x_2y_1+x_1y_2,  \quad y_1x_2= x_1y_1-x_2y_2$, & $\alpha \not = -1$ & \\
             & $y_2x_1 = x_1y_1+\alpha x_2y_2, \quad y_2x_2=-x_2y_1+x_1y_2$. & &  \\ \hline
\multirow{3}{*}{$\mathbb{X}$} &$x_2x_1 = x_1 x_2,\quad y_2y_1 = -y_1y_2$, & & \\
             & $y_1x_1 = x_1y_2,  \quad y_1x_2= x_1y_2+x_2y_2$ &  & \checkmark \\
             & $y_2x_1 = x_1y_1, \quad y_2x_2=x_1y_1+x_2y_1$. & & \\ \hline
\multirow{3}{*}{$\mathbb{Y}$} & $x_2x_1 = x_1 x_2,\quad y_2y_1 = -y_1y_2$, & &  \\
             &$y_1x_1 = x_1y_1,  \quad y_1x_2= \alpha x_1y_1-x_2y_1+x_1y_2$, &$\alpha$ is general & \checkmark  \\
             &  $ y_2x_1 = x_1y_2, \quad y_2x_2=x_1y_1+\alpha x_1y_2-x_2y_2$. & &\\ \hline
\multirow{3}{*}{$\mathbb{Z}$} &  $x_2x_1 = -x_1 x_2,\quad y_2y_1 = y_1y_2$, & &  \\
             & $y_1x_1 = x_1y_1+x_2y_2,  \quad y_1x_2 = x_2y_1+x_1y_2$,& $\alpha(1+\alpha)\not = 0$ &  \\
             &  $y_2x_1 = \alpha x_2y_1-x_1y_2, \quad y_2x_2=\alpha x_1y_1-x_2y_2$. & is general & \\ \hline
        \end{tabular}
    \caption{Double extensions}
    \label{tab:4-DOE}
\end{table}
\end{landscape}

\end{document}